\def\?[#1]{\textbf{[#1]}\marginpar{\Large{\textbf{??}}}}
\newtheorem{prop}{Proposition}
\newtheorem{thm}[prop]{Theorem}
\newtheorem{lem}[prop]{Lemma}
\newtheorem{cor}[prop]{Corollary}
\newtheorem{rem}[prop]{Remark}
\numberwithin{equation}{section}
\numberwithin{prop}{section}
\DeclareMathOperator{\pv}{p.v.}
\DeclareMathOperator{\supp}{supp}
\begin{document}
\title[Effective multipliers for particular weights]{Effective multipliers for weights whose log are H\"older continuous. Application to the cost of fast boundary controls for the 1D Schr\"odinger equation.}

\author{Pierre Lissy}

\address{CERMICS, Ecole des Ponts, IP Paris, Marne-la-Vall\'ee, France}

\begin{abstract}
We give a simple proof of the Beurling-Malliavin multiplier theorem  (BM1) in the particular case of weights that verify the usual finite logarithmic integral condition and such that their log are H\"older continuous with exponent less than $1$. Our proof has the advantage to give an explicit version of BM1, in the sense that one can give precise estimates from below and above for the multiplier, in terms of the exponential type we want to reach, and the constants appearing in the H\"older condition of our weights. The same ideas can be applied to a  particular weight, that will lead to an improvement on the estimation of the  cost of fast boundary controls for the 1D Schr\"odinger  equation on a segment. Our proof is mainly based on the use of a modified Hilbert transform together with its link with the harmonic extension in the complex upper half plane and some modified conjugate harmonic extension in the upper half plane.
\end{abstract}
\maketitle

\vspace*{0.2cm}

\textbf{Keywords.} Multipliers, Hilbert and Poisson transforms, Schr\"odinger equation, moment method in control theory, cost of the control.

\vspace*{0.2cm}

\textbf{MSC 2020.} 42A45, 35Q41, 42A70, 44A15, 93B05.

\section{Introduction}
\subsection{Presentation of the problem} The main goal of the present paper is to give a general method to construct explicit multipliers in the Beurling-Malliavin multiplier theorem  (BM1), for a particular class of weights. This question is motivated by the fact that obtaining quantitative estimates is crucial in order to obtain explicit estimates for the cost of fast controls of large class of 1D partial differential equations. Apart from the example given in Section \ref{s:intro} (the 1D Schr\"odinger equation), the construction suggested here might have further applications that are discussed at the end of the paper (see Section \ref{sec:conc}).

Let us first fix some notations, that will be useful in the rest of the paper. First, we denote the Fourier transform on $\mathbb{R}$ by
\begin{equation}\label{TF}
\mathcal{F}f(\xi)=\int_{\mathbb{R}}e^{-2\pi ix\xi}f(x)dx,
\end{equation}
whenever this quantity makes sense.
Its inverse is given by
\begin{equation*}
\mathcal{F}^{-1}g(x)=\int_{\mathbb{R}}e^{2\pi ix\xi}g(\xi)d\xi,
\end{equation*}
whenever this quantity makes sense.
Notice what we chose to use the ``physicists'' definition of the Fourier transform. This is for pure convenience, in order to match \cite{MR4081109} as closely as possible, since we will use some results coming from this article.

For a positive bounded measurable function $\omega$, we introduce its logarithmic integral given by 
\begin{equation}
\label{e:logint}
\mathcal{L}(\omega):=\int_\mathbb{R}\frac{\log\omega(x)}{1+x^2}dx \in [-\infty,+\infty).
\end{equation}
We will use in what follows the fact that the logarithmic integral converges (\textit{i.e.} $\mathcal{L}(\omega)>-\infty$) if and only if $\log(\omega)\in L^1(\mathbb{R},\langle x\rangle^{-2}dx)$, where $\langle x \rangle=\sqrt{1+x^2}$.

For $f\in L^2(\Omega)$, let us remind that its essential spectrum is given by the essential support of its Fourier transform: 
$$\text{spec}(f):=\mathbb R\setminus \bigcup_{\Omega}\{\Omega\mbox{ open set},\,\mathcal {F} f=0 \mbox{ a.e. on }\Omega\}.$$
Remind that the essential spectrum is defined up to measurable sets of measure $0$.

For any $\sigma>0$, we also introduce the Paley-Wiener class 
$$PW_2(\sigma)=\{f\in L^2(\mathbb R) | \text{spec}(f)\subset [-\sigma,\sigma]\}.$$
Remind that by the celebrated Paley-Wiener Theorem, $PW_2(\sigma)$ is exactly the set of entire functions of exponential type $2\pi\sigma$ that are in $L^2(\mathbb R)$.

First of all, let us give a possible version of the statement of BM1, that was discovered in \cite{BM61}, as given in \cite[Theorem BM1]{bm7}.
\begin{thm}\label{BM1}
Assume that $\mathcal{L}(\omega)<-\infty$ and that $\log(w)$ is a globally Lipschitz function. Then, for any $\sigma>0$, there exists a nonzero  $f\in PW_2(\sigma)$ such that 
$|f(x)|\leqslant \omega(x)$.
\end{thm}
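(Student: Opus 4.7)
The plan is to construct $f$ explicitly as a product $f(z)=e^{i\pi\sigma z}G(z)$, with an outer-type function $G$ built from $u:=\log\omega$ in the upper half plane via the Poisson extension and a modified harmonic conjugate.

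The hypotheses imply $u$ is globally Lipschitz, bounded above (since $\omega$ is bounded), and lies in $L^1(\mathbb R,\langle x\rangle^{-2}\,dx)$. Let $U(x,y)=(P_y*u)(x)$ be the bounded harmonic Poisson extension of $u$ to $\mathbb C^+$. Because $u$ is not in $L^1(\mathbb R)$ in general, the ordinary Hilbert transform diverges, so I would use the modified Hilbert transform
\[
\tilde Hu(x):=\frac{1}{\pi}\,\pv\!\int_\mathbb R u(t)\left(\frac{1}{x-t}+\frac{t}{1+t^2}\right)dt
\]
together with the matching modified conjugate Poisson extension
\[
V(x,y):=\frac{1}{\pi}\int_\mathbb R u(t)\left(\frac{x-t}{(x-t)^2+y^2}+\frac{t}{1+t^2}\right)dt,
\]
both convergent thanks to $u\in L^1(\langle t\rangle^{-2}dt)$. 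Then $F:=U+iV$ is holomorphic in $\mathbb C^+$ with boundary trace $u+i\tilde Hu$, and the Lipschitz regularity of $u$ ensures $\tilde Hu$ is continuous on $\mathbb R$ (in fact Lipschitz up to a logarithmic correction).

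In $\mathbb C^+$ set $f_+(z):=e^{i\pi\sigma z}e^{F(z)}$: this is holomorphic, bounded (as $U$ is bounded above), satisfies $|f_+(x)|=\omega(x)$ on $\mathbb R$, and decays like $e^{-\pi\sigma y}$ as $y\to+\infty$. The main obstacle is to turn this half-plane object into an entire function of exponential type $2\pi\sigma$. The approach is to mirror the construction in $\mathbb C^-$ via $f_-(z):=e^{-i\pi\sigma z}e^{\overline{F(\bar z)}}$, whose boundary modulus on $\mathbb R$ is again $\omega(x)$, and then exploit the phase regularity coming from the Lipschitz hypothesis (via continuity of $\tilde Hu$) to match $f_+$ and $f_-$ across $\mathbb R$ into a single entire function $f$, modulo absorbing a bounded inner factor.

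Finally $|f|\leqslant\omega$ on $\mathbb R$ and the plane-wave factors $e^{\pm i\pi\sigma z}$ produce the exponential type $2\pi\sigma$. The $L^2$ condition, which is not automatic when $\omega$ fails to decay, is secured by running the construction with $\sigma-\varepsilon$ in place of $\sigma$ and multiplying by a fixed $PW_2(\varepsilon)$ function of sup-norm $\leqslant 1$ with $L^2$-decay (for instance a squared sinc), preserving the bound $|f|\leqslant\omega$. All the constants involved ($\|U\|_\infty$, local bounds on $\tilde Hu$, the size of the auxiliary factor) depend explicitly on the Lipschitz constant of $u$ and on $\sigma$, yielding the effective estimates announced in the abstract.
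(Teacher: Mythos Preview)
The paper does not give a self-contained proof of Theorem~\ref{BM1}; it cites it as a known result and then develops, in Section~\ref{s:effbmm}, a quantitative version for the H\"older subclass. The relevant comparison is therefore with the strategy sketched after Remark~1.2 and implemented in Theorems~\ref{t:effect-multiplier2} and~\ref{thmstep2}.

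Your construction has the right ingredients up to the point where you define $f_+$ and $f_-$, but the step you call ``matching $f_+$ and $f_-$ across $\mathbb R$ modulo a bounded inner factor'' is exactly where the entire difficulty of BM1 lies, and your proposal does not address it. On $\mathbb R$ the two boundary traces have phase difference $2\bigl(\pi\sigma x+\tilde Hu(x)\bigr)$; this is an unbounded function of $x$, so it cannot be absorbed into a bounded inner factor, and continuity (or even log-Lipschitz regularity) of $\tilde Hu$ is of no help. What is actually needed is the atomization device: one first arranges that $s_0(x)=\pi\sigma x+H(\Omega)(x)$ is increasing (this is the ``well-prepared'' condition $\|H(\Omega)'\|_{L^\infty}\leq\pi\sigma$), then writes $s_0=\pi k+s$ with $k$ integer-valued and $\|s\|_\infty\leq\pi/2$, so that the $e^{2\pi i k}$ factor disappears and only a bounded correction $m=e^{-H(s)}$ remains. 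This is precisely the content of the proof of Theorem~\ref{t:effect-multiplier2} and of Lemma~\ref{p:modulus}, and it is the missing idea in your sketch.

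There is a second gap: the Lipschitz hypothesis on $\log\omega$ does \emph{not} by itself make the weight well-prepared, since the Hilbert transform of a Lipschitz function is in general only log-Lipschitz, so $H(\Omega)'$ need not be bounded, let alone bounded by $\pi\sigma$. The paper handles this (for H\"older weights) by replacing $\Omega$ with its Poisson regularization $P_t\Omega$ and choosing $t$ so that \eqref{e:hilbert-pt} holds; some analogous preprocessing step is unavoidable and is absent from your argument. Consequently the claim in your last sentence that effective constants fall out of the construction is not supported.
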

\begin{rem}
\begin{enumerate}
\item Theorem \ref{BM1} is a very deep result that has received permanent attention since its discovering. It has been proved by different ways for instance in \cite{BM61,MR1195788,MR2016247,HJ,MR0521462,MR0229011,bm7}.
\item The fact that it is called a ``multiplier theorem'' might be not clear for the moment, but it will become clearer in the application presented in Section \ref{s:intro}.
\item As remarked in \cite{BM61}, the first hypothesis $\mathcal{L}(\omega)<-\infty$ cannot be relaxed. Indeed, the uncertainty principle for functions in the Hardy class $H^2(\mathbb C^+)$ (\cite[pp. 32-36]{HJ}) asserts that for $f\in H^2(\mathbb C^+)$, we have 
$$\mathcal L(\log(|f|))=-\infty \Rightarrow f=0.$$
Since up to a translation in the Fourier variable, functions in $PW_2(\sigma)$ are in $H^2(\mathbb C^+)$, we see that indeed  $\mathcal{L}(\omega)<-\infty$ cannot be relaxed (if  $\mathcal{L}(\omega)=-\infty$, then any $f$ as in Theorem \ref{BM1} also verifies  $\mathcal{L}(\log|f|)=-\infty$).
\item As explained in \cite{bm7}, the $L^2$-regularity of $f$ is quite secondary (other classes of functions can be equivalently chosen, by standard tricks on functions of exponential type), and the Lipschitz hypothesis can also be relaxed or changed, but  $\mathcal{L}(\omega)<-\infty$  itself is not sufficient and some condition has to be added (see for instance \cite[Chapter XI, Section D]{MR1195788}).
Notably, in what follows, what will be indeed more important is some Lipschitz regularity related to a modified Hilbert transform of $\log(w)$.
\end{enumerate}
\end{rem}

The proof developed in \cite{bm7} is very interesting and enlightening. It decomposes the proof of Theorem \ref{BM1} into two subproblems: 
\begin{enumerate}
\item Firstly, prove Theorem \ref{BM1} when $\omega$ is ``well-behaved'', in the sense that its modified Hilbert Transform (see Subsection \ref{kober}) has ``small'' global Lipschitz constant. This proof relies on the use of Hilbert transforms, and ideas quite similar to the atomization of measures presented in \cite[Chapter XI]{MR1195788}. We will call these weights \textit{well-prepared}.
\item Secondly, for a given weight, find a modification of this weight that ``resembles'' the original weight and is well-prepared in the sense of the first point.\end{enumerate}

The main goal of the present article is to give a \textit{quantitative version} of this proof. This was partially done in \cite{MR4081109}, concerning the first point of the proof detailed above (this is Theorem \ref{t:effect-multiplier}). We will refine very slightly the result obtained there (see Theorem \ref{t:effect-multiplier2}). What remains to be done, and will notably interest us in the present article, is to perform the second point. We will give a very general strategy of the proof, that relies on the links between the harmonic extensions in the upper half plane and the Hilbert transform. Notably, our proof will basically work as soon as the harmonic extension of the weight  ``resembles'' the original weight. This will be the case notably for the class of weights considered here, but we believe that our proof might have further applications, in the sense that it might give interesting results as soon as the harmonic and conjugate harmonic extensions of the weight in the upper half plane have explicit expressions or at least can be quite finely estimated.

 In Section \ref{sec:part}, we will give an application of our strategy to an explicit weight.  It will enable us to improve drastically the upper bound on the cost of fast controls for the 1D Schr\"odinger equation given in \cite{TT} (see the introduction of Section \ref{s:intro} for a detailed presentation of this problem). The explanation comes from the fact that contrarily to all the  ``explicit'' constructions of multiplier known by the author (atomization of measures as in \cite[Chapter XI]{MR1195788} or \cite{BM61}, infinite convolution of step function as in \cite[Chapter I]{MR1996773}, or using well-known bump functions as in \cite{TT,PL14}, ...) lead to \emph{even} multipliers. Here,  the particular weight that will interest us is far from being even, so that our strategy will give better estimates than using the usual ones, as it will ``resemble'' more the original weight. This is one of the nnovation of the present article.

This question of ``quantifying'' Theorem \ref{BM1} is likely to be a very difficult problem in general. A possible explanation is that any proof known by the author of the general version of Theorem \ref{BM1}  is very intricate and fail to lead to explicit estimations. That is why we only consider weights that are globally H\"older continuous at some exponent that is $<1$. Indeed, as mentioned in Section \ref{sec:conc}, this is a natural class if we think on potential applications, notably in control theory of PDEs. Notice that it will be important that the modified weight is sufficiently close to the original one. Indeed, for our applications, it will be important to have lower bound on the multiplier $\psi$, at least at some part of the real line, similar to the one given in Theorem \ref{t:effect-multiplier}. Thanks to an appropriate translation argument ( see \eqref{defgl}), it turns out to be enough for our purposes
(see for instance \eqref{defgl} and \eqref{glpl}).

Our first main Theorem will be the following.
\begin{thm}\label{th:main}Assume that $\omega$ is positive, essentially bounded, $\mathcal L(\omega)<+\infty$, and $\omega$ is such that there exists $\alpha>0$ and $K_0>0$ such that for any $x,y\in\mathbb R^2$, we have 
\begin{equation}\label{HC}|\log \omega(x)-\log \omega(y)|\leqslant K_0|x-y|^\alpha.\end{equation}
Then, for any  $0<\sigma'<\sigma<1/10$, there exists a nonzero  $\psi \in L^2(\mathbb R)$ such that 
$$
\supp\psi\subset[0,\sigma],
\quad |\mathcal {F}{\psi}|\leq\exp\left(\left(\frac{K_0}{\cos \left(\frac{\pi  \alpha}{2}\right)}\right)^{\frac{1}{1-\alpha}}\left(\frac{1}{\pi\sigma' }\right)^{\frac{\alpha}{1-\alpha}}\right)\omega,
$$
and on one of the interval $(-1,-1/2)$ or $(1/2,1)$, we have
$$
|\mathcal {F}{\psi}(x)|\geq C (\sigma-\sigma')^6\exp\left(-\left(\frac{K_0}{\cos \left(\frac{\pi  \alpha}{2}\right)}\right)^{\frac{1}{1-\alpha}}\left(\frac{1}{\pi\sigma' }\right)^{\frac{\alpha}{1-\alpha}}\right)\omega(x),$$
for some numerical constant $C>0$.
\end{thm}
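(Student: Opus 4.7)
The strategy is to reduce Theorem~\ref{th:main} to the quantitative well-prepared case supplied by Theorem~\ref{t:effect-multiplier2}, by replacing $\omega$ with a regularised weight obtained from the Poisson harmonic extension of $\log\omega$ to the upper half-plane at a carefully chosen height $h>0$. Concretely, I would set
$$\log\tilde\omega_h(x):=P_h[\log\omega](x)=\frac{1}{\pi}\int_{\mathbb R}\frac{h}{(x-t)^2+h^2}\log\omega(t)\,dt,$$
so that $\log\tilde\omega_h$ is real-analytic on $\mathbb R$ and its modified Hilbert transform is the boundary value of the conjugate harmonic function evaluated at height~$h$; in particular, Poisson regularisation and the modified Hilbert transform commute (both are Fourier multipliers).

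The Hölder hypothesis \eqref{HC}, together with the Beta-function identity $\int_{\mathbb R}|u|^\alpha(u^2+1)^{-1}du=\pi/\cos(\pi\alpha/2)$, immediately gives the pointwise comparison
$$\big|\log\tilde\omega_h(x)-\log\omega(x)\big|\le\frac{K_0\,h^\alpha}{\cos(\pi\alpha/2)},\qquad x\in\mathbb R,$$
which is the source of the factor $1/\cos(\pi\alpha/2)$ in the final exponent. Differentiating the Poisson kernel, or equivalently invoking the Cauchy--Riemann equations for the harmonic extension of a Hölder function, yields Lipschitz estimates for both $\log\tilde\omega_h$ and its modified Hilbert transform, of order $O(K_0 h^{\alpha-1}/\cos(\pi\alpha/2))$. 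Thus $\tilde\omega_h$ is \emph{well-prepared} in the sense required by Theorem~\ref{t:effect-multiplier2}.

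I would then calibrate $h$ at the threshold where Theorem~\ref{t:effect-multiplier2} becomes applicable, namely so that the Lipschitz constant of the modified Hilbert transform of $\log\tilde\omega_h$ is essentially $\pi\sigma'$. This forces
$$h\simeq\left(\frac{K_0}{\cos(\pi\alpha/2)\,\pi\sigma'}\right)^{1/(1-\alpha)},$$
which turns the pointwise approximation error into exactly
$$\frac{K_0\,h^\alpha}{\cos(\pi\alpha/2)}=\left(\frac{K_0}{\cos(\pi\alpha/2)}\right)^{1/(1-\alpha)}\left(\frac{1}{\pi\sigma'}\right)^{\alpha/(1-\alpha)},$$
matching the announced exponent. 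Theorem~\ref{t:effect-multiplier2} applied to $\tilde\omega_h$ then produces a nonzero $\tilde\psi\in L^2(\mathbb R)$ with $\supp\tilde\psi\subset[0,\sigma]$, $|\mathcal F\tilde\psi|\le\tilde\omega_h$ on $\mathbb R$, and $|\mathcal F\tilde\psi(x)|\ge C(\sigma-\sigma')^6\tilde\omega_h(x)$ on one of the intervals $(-1,-1/2)$ or $(1/2,1)$. Setting $\psi:=\tilde\psi$ and using the pointwise comparison above to replace $\tilde\omega_h$ by $\omega$ (up to the explicit exponential factor) delivers the theorem.

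The principal obstacle is turning the pointwise Hölder control of $\log\omega$ into an \emph{explicit} quantitative Lipschitz estimate for the \emph{modified} Hilbert transform of $\log\tilde\omega_h$, not simply for $\log\tilde\omega_h$ itself: this is precisely where the geometry of the harmonic and conjugate-harmonic extensions, together with the Cauchy--Riemann equations, come into play, and is the technical innovation over more classical bump-function or atomization constructions. A secondary technical point is to propagate the explicit factor $(\sigma-\sigma')^6$ from the well-prepared lower bound through the switch from $\tilde\omega_h$ back to $\omega$ without further degradation, and to identify on which of the two intervals $(-1,-1/2)$ or $(1/2,1)$ the lower bound actually holds.
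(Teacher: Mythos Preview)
Your proposal is correct and follows essentially the same route as the paper: regularise $\Omega=-\log\omega$ by the Poisson extension $P_t\Omega$, use the H\"older hypothesis and the identity $\int_{\mathbb R}|y|^\alpha(y^2+t^2)^{-1}dy=\pi t^{\alpha-1}/\cos(\pi\alpha/2)$ to get both the pointwise comparison $|P_t\Omega-\Omega|\le K_0t^\alpha/\cos(\pi\alpha/2)$ and (via $H(P_t\Omega)=\tilde Q_t\Omega+\text{const}$ and a cancellation in the derivative kernel) the bound $\|H(P_t\Omega)'\|_{L^\infty}\le K_0t^{\alpha-1}/\cos(\pi\alpha/2)$, then choose $t$ to make this equal to $\pi\sigma'$ and apply Theorem~\ref{t:effect-multiplier2}. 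The only cosmetic difference is that the paper packages the two estimates on $P_t\Omega$ as a separate lemma (Theorem~\ref{thmstep2}) and takes the exact value of $t$ rather than writing $h\simeq$, which is needed to reproduce the precise exponent in the statement.
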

\begin{rem}To the opinion of the author, the proof of Theorem \ref{th:main} is as important as the result itself, in the sense that it gives a general strategy to construct explicit multipliers. Notably, following the proof of Theorem \ref{th:main} rather than applying  Theorem \ref{th:main} as a black box might lead to better bounds, for explicit weights for which more precise computations can be performed. This is the case for the particular weight studied in Section  \ref{sec:part}.
\end{rem}
Our second main Theorem will be  Theorem \ref{th:3}, that requires some extra explanations that are postponed  to Section \ref{s:intro}.
\subsection{Usual Hilbert transform, Poisson transform and Conjugate Poisson transform}
We start with some notations and concepts that we will use in the sequel. We follow closely the presentation given in \cite{MR4081109}, extracting what it strictly needed in order to develop our arguments, and giving some complements. For more informations on Hilbert transforms, we refer to \cite{H1,H2}.

For $x\in\mathbb{R}$, we write $\langle x\rangle=(1+x^2)^{1/2}$.
Let $H_0$ be the standard Hilbert transform defined as convolution with $\pv\frac{1}{\pi x}$: For $f\in L^1(\mathbb{R},\langle x\rangle^{-1}dx)$, we introduce
\begin{equation}
\label{e:def-hilbert}
H_0(f)(x)=f\ast\pv\frac{1}{\pi x}=\frac{1}{\pi}\pv\int\frac{f(x-t)}{t}dt:=\frac{1}{\pi}\lim_{\varepsilon\to0+}\int_{|t|\geq\varepsilon}\frac{f(x-t)}{t}dt.
\end{equation}
Remind that we have the usual inversion formula 
\begin{equation}
\label{e:inversion-0}
H_0(H_0(f))=-f,
\end{equation}
for all $f\in L^1(\mathbb{R},\langle x\rangle^{-1}dx)$ such that $H_0(f)\in L^1(\mathbb{R},\langle x\rangle^{-1}dx)$.

Let us now introduce the Poisson kernel
\begin{equation}\label{pt} P_t(x)=\frac{t}{\pi(x^2+t^2)},\,t>0,\,x\in\mathbb R,\end{equation}
which leads to the Poisson transform: for $f\in L^1(\mathbb{R},\langle x\rangle^{-2}dx)$, we call
\begin{equation}\label{ptf}P_tf(x)=P_t*f(x)=\int_{\mathbb R}\frac{tf(y)}{\pi(t^2+(x-y)^2)}dy.
\end{equation}

We also introduce the conjugate Poisson kernel
\begin{equation}\label{qt} Q_t(x)=\frac{x}{\pi(x^2+t^2)},\,t>0,\,x\in\mathbb R,\end{equation}
which leads to the  conjugate Poisson transform: for 
$f\in L^1(\mathbb{R},\langle x\rangle^{-1}dx)$, we call
\begin{equation}Q_tf(x)=Q_t*f(x)=\int_{\mathbb R}\frac{(x-y)f(y)}{\pi(t^2+(x-y)^2)}dy.
\end{equation}
\begin{lem}
For $t,s\geqslant 0$, we have the following convolution formula:
\begin{equation}\label{ptqs}Q_s*P_t=Q_{t+s}.\end{equation}

\end{lem}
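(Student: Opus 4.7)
The plan is to pass to the Fourier side, since convolution becomes multiplication, both kernels are tempered distributions of a familiar type, and their Fourier transforms are standard. With the physicist convention \eqref{TF} used in this paper, the Poisson kernel satisfies $\mathcal F P_t(\xi)=e^{-2\pi t|\xi|}$, which I would recover either by direct residue calculus or by inverting the well-known formula giving $P_t$ as $\mathcal F^{-1}(e^{-2\pi t|\xi|})$. For the conjugate Poisson kernel $Q_t$, the cleanest route is the complex identity
\begin{equation*}
Q_t(x)+iP_t(x)=\frac{1}{\pi(x-it)},
\end{equation*}
whose Fourier transform is computed by closing the contour in the appropriate half plane (the pole at $x=it$ is enclosed only when $\xi<0$); this yields $\mathcal F(Q_t+iP_t)(\xi)=2i\,e^{-2\pi t|\xi|}\mathbbm 1_{\xi<0}$ and hence, after subtracting $i\mathcal F P_t$, the formula $\mathcal F Q_t(\xi)=-i\sgn(\xi)\,e^{-2\pi t|\xi|}$.

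With these in hand the verification is immediate: for $s,t\geqslant 0$,
\begin{equation*}
\mathcal F(Q_s*P_t)(\xi)=\mathcal F Q_s(\xi)\,\mathcal F P_t(\xi)=-i\sgn(\xi)\,e^{-2\pi s|\xi|}\cdot e^{-2\pi t|\xi|}=-i\sgn(\xi)\,e^{-2\pi(s+t)|\xi|}=\mathcal F Q_{s+t}(\xi),
\end{equation*}
and the conclusion follows by the injectivity of the Fourier transform on the relevant class (both sides are bounded functions, and $Q_{s+t}$ is also determined by its Fourier transform as a principal-value distribution when $s+t=0$, which is excluded from the statement's interior since the strict inequality $t+s>0$ will in practice always hold when we use it).

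An alternative I would mention in passing, which avoids even computing $\mathcal F Q_t$ explicitly, is to note that the above Fourier identities encode exactly the relation $Q_t=H_0(P_t)$, so that using the semigroup property $P_s*P_t=P_{s+t}$ and the fact that $H_0$ commutes with convolution one gets $Q_s*P_t=H_0(P_s)*P_t=H_0(P_s*P_t)=H_0(P_{s+t})=Q_{s+t}$. The only subtlety to watch out for, in either approach, is the justification of the interchanges of integral and the application of $H_0$: this is routine since $P_t\in L^1$ and $Q_t\in L^1(\mathbb R,\langle x\rangle^{-1}dx)$, but it is the one bookkeeping point where some care is needed. No serious obstacle is expected; the identity is essentially a computation.
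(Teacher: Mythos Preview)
Your proof is correct and follows essentially the same approach as the paper: both compute the Fourier transforms $\mathcal F P_t(\xi)=e^{-2\pi t|\xi|}$ and $\mathcal F Q_t(\xi)=-i\sgn(\xi)e^{-2\pi t|\xi|}$ and then multiply. You supply more detail (the contour argument for $\mathcal F Q_t$ and the alternative via $Q_t=H_0(P_t)$), whereas the paper simply quotes these Fourier transforms as well known and notes that $P_t\in L^1$ so the convolution is defined.
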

\begin{proof}
$P_t\in L^1(\mathbb R)$ so $Q_s*P_t$ makes sense.
It is well-known that for any $r>0$, we have
$$\mathcal F(P_r)(\xi)= e^{-2\pi r|\xi|} \mbox{ and } \mathcal F(Q_r)(\xi)= -i \text{sign}(\xi)e^{-2\pi r|\xi|}.$$
Formula \eqref{ptqs}  follows by using the well-known link between the Fourier transform and the convolution product.
\end{proof}
Let us also point out the following well-known links between the Hilbert, Poisson and Conjugate Poisson transforms, that can be easily proved by passing to the Fourier transform.
\begin{lem}
If  $f\in L^1(\mathbb{R},\langle x\rangle^{-1}dx)$ is such that  $H_0f\in L^1(\mathbb{R},\langle x\rangle^{-1}dx)$, then 
\begin{enumerate}
\item \begin{equation}Q_tf=P_t(H_0f),\end{equation}
\item \begin{equation}\label{qth}P_tf=-Q_t(H_0)f,\end{equation}
\item \begin{equation}\label{HQ}H_0f=\lim_{t\rightarrow 0^+} Q_tf.\end{equation}
\end{enumerate}
\end{lem}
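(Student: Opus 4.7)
The plan is to verify each of the three identities by taking the Fourier transform of both sides and reducing to an algebraic identity between Fourier multipliers. Three ingredients suffice: the convolution theorem $\mathcal{F}(f\ast g)=\mathcal{F}(f)\mathcal{F}(g)$; the explicit Poisson and conjugate Poisson symbols $\mathcal{F}(P_t)(\xi)=e^{-2\pi t|\xi|}$ and $\mathcal{F}(Q_t)(\xi)=-i\sgn(\xi)e^{-2\pi t|\xi|}$ recalled in the previous lemma; and the classical symbol of the Hilbert transform $\mathcal{F}(H_0 f)(\xi)=-i\sgn(\xi)\mathcal{F}(f)(\xi)$, which comes from the tempered distributional identity $\mathcal{F}\bigl(\pv\tfrac{1}{\pi x}\bigr)(\xi)=-i\sgn(\xi)$.

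For (1), both sides have Fourier transform equal to $-i\sgn(\xi)e^{-2\pi t|\xi|}\mathcal{F}(f)(\xi)$, so the identity holds. For (2), the Fourier transform of $-Q_t(H_0 f)$ equals
\[
-\bigl(-i\sgn(\xi)e^{-2\pi t|\xi|}\bigr)\bigl(-i\sgn(\xi)\bigr)\mathcal{F}(f)(\xi)=e^{-2\pi t|\xi|}\mathcal{F}(f)(\xi),
\]
using $\sgn(\xi)^2=1$ a.e., which matches $\mathcal{F}(P_t f)$. For (3), by (1) one has $Q_t f=P_t(H_0 f)$, i.e.\ $Q_t f$ is the Poisson extension of $H_0 f$ to the upper half plane; since $(P_t)_{t>0}$ is a standard approximation of the identity, $Q_t f\to H_0 f$ as $t\to 0^+$ (a.e., at every Lebesgue point of $H_0 f$). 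Alternatively, on the Fourier side, $\mathcal{F}(Q_t f)(\xi)=e^{-2\pi t|\xi|}\mathcal{F}(H_0 f)(\xi)$ converges pointwise to $\mathcal{F}(H_0 f)(\xi)$ by dominated convergence.

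The main obstacle is purely technical: under the sole hypothesis that $f$ and $H_0 f$ belong to $L^1(\mathbb{R},\langle x\rangle^{-1}dx)$, the Fourier transforms $\mathcal{F}(f)$ and $\mathcal{F}(H_0 f)$ exist only as tempered distributions, and one must justify that the products of multipliers above make sense and that each convolution $P_t\ast f$, $Q_t\ast f$, $P_t\ast(H_0 f)$, $Q_t\ast(H_0 f)$ converges absolutely for a.e.\ $x$. This is handled by the decay $P_t(y)=O(|y|^{-2})$ and $Q_t(y)=O(|y|^{-1})$ at infinity, which pairs perfectly with the weight $\langle y\rangle^{-1}$ in the hypothesis. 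The cleanest implementation is to verify the identities first for Schwartz $f$ by direct Fourier computation, and then pass to the limit by a truncation/density argument using the uniform-in-$t$ pointwise bounds on the convolutions; the limit in (3) is then justified by the classical Poisson approximation-of-identity statement applied to $H_0 f\in L^1(\mathbb{R},\langle x\rangle^{-2}dx)$.
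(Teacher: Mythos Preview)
Your proposal is correct and follows exactly the approach the paper indicates: the paper does not give a detailed proof of this lemma but simply states that these identities ``can be easily proved by passing to the Fourier transform,'' which is precisely what you do via the multiplier symbols of $P_t$, $Q_t$, and $H_0$. Your added care about the distributional interpretation and the density/truncation argument is more than the paper itself supplies.
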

\subsection{Kober's modification of the Hilbert transform, and a modified conjugate Poisson transform}\label{kober}
In what follows, we will need to consider functions that are not necessarily in $L^1(\mathbb{R},\langle x\rangle^{-1}dx)$, so that the Hilbert and Conjugate Poisson transforms may not necessarily make sense. Therefore, we will ``extend'' the Hilbert and the Conjugate Poisson transforms to a larger space $L^1(\mathbb{R},\langle x\rangle^{-2}dx)$, which is the natural space for the Poisson transform and will be what we need for what follows, since it is closely related to the finiteness of the logarithmic integral \eqref{e:logint}. To achieve this, we modify the integral kernels so that they decay like $|x|^{-2}$ as $|x|\to\infty$, following \cite{MR0009067} (see also \cite[16.3]{H2}).
Accordingly, we introduce
\begin{equation}
\label{e:hilbert}
H(f)(x)=\frac{1}{\pi}\pv\int_{\mathbb{R}}f(y)\left(\frac{1}{x-y}+\frac{y}{y^2+1}\right)dy,
\quad f\in L^1(\mathbb{R},\langle x\rangle^{-2}dx),
\end{equation}
and we modify the Conjugate Poisson transform in the same way, for $t>0$, 
\begin{equation}
\label{tqtf}
\tilde{Q}_t(f)(x)=\frac{1}{\pi}\int_{\mathbb{R}}f(y)\left(\frac{(x-y)}{(x-y)^2+t^2}+\frac{y}{y^2+1}\right)dy,
\quad f\in L^1(\mathbb{R},\langle x\rangle^{-2}dx).
\end{equation}
In general, the formulas \eqref{e:hilbert}  and \eqref{tqtf} converge a.e. when $f\in L^1(\mathbb{R},\langle x\rangle^{-2}dx)$.

If $f\in L^2(\mathbb R)$ for instance, then,  we know that $Hf\in L^2(\mathbb R)\subset L^1(\mathbb{R},\langle x\rangle^{-1}dx)$ (by the Cauchy-Schwarz inequality) and the definition differs from the standard Hilbert transform \eqref{e:def-hilbert} by a constant 
\begin{equation}\label{consf}\int_{\mathbb{R}}\frac{yf(y)}{y^2+1}dy=-Q_1(f)(0).\end{equation}
The constant appears also  in the modified Conjugate Poisson transform \eqref{tqtf}  for $f\in L^2(\mathbb R)$.
Notably, in this setting, we have the inversion formula 
\begin{equation}
\label{e:inversion}
H(H(f))=-f-Q_1(H_0f)(0).
\end{equation}
However, this quantity does not necessarily make sense if we only have  $Hf\in L^1(\mathbb{R},\langle x\rangle^{-2}dx)$. Hence, we need to derive an adequate inversion formula in this class. This has already be done.
\begin{lem}\cite[(3.2) and footnote Page 69]{MR0009067}
If $f\in L^1(\mathbb{R},\langle x\rangle^{-2}dx)$ and $Hf\in L^1(\mathbb{R},\langle x\rangle^{-2}dx)$, then

\begin{equation}
\label{rinv}
H(H(f))=-f+P_1(f)(0).
\end{equation}

\end{lem}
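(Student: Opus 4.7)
My strategy is to reduce the inversion identity to the classical Hilbert-transform inversion $H_0(H_0 f) = -f$ by carefully tracking the constants introduced by the $\frac{y}{y^2+1}$ modification, then extend by density. The modified kernel was engineered precisely so that $H$ annihilates constants, which should make the iteration transparent once the ``constant corrections'' are identified.

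Concretely, assume first $f \in \mathcal{S}(\mathbb{R})$. Splitting the modified kernel gives
$$ H f(x) = H_0 f(x) + C(f),\qquad C(f) := \frac{1}{\pi}\int_{\mathbb{R}}\frac{y f(y)}{y^2+1}\,dy, $$
and a direct symmetric-principal-value computation shows $H[\text{constant}] \equiv 0$ (both $\pv\!\int_{-R}^{R}\tfrac{dy}{x-y}$ and $\int_{-R}^{R}\tfrac{y\,dy}{y^2+1}$ vanish as $R\to\infty$). Hence by linearity,
$$ H(Hf)(x) = H(H_0 f)(x) = H_0(H_0 f)(x) + C(H_0 f) = -f(x) + C(H_0 f). $$
The remaining task is to identify $C(H_0 f) = P_1 f(0)$. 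Recognizing $\frac{y}{\pi(y^2+1)} = Q_1(y)$, using the anti-self-adjointness of $H_0$ on $\mathcal{S}(\mathbb{R})$, together with the Fourier computation
$$ \mathcal{F}(H_0 Q_1)(\xi) = (-i\sgn\xi)^2\,e^{-2\pi|\xi|} = -\mathcal{F}(P_1)(\xi), $$
so that $H_0 Q_1 = -P_1$, yields
$$ C(H_0 f) = \int_{\mathbb{R}} H_0 f\cdot Q_1\,dy = -\int_{\mathbb{R}} f\cdot H_0 Q_1\,dy = \int_{\mathbb{R}} f\cdot P_1\,dy = P_1 f(0), $$
the last equality using evenness of $P_1$.

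Finally, I would extend to arbitrary $f \in L^1(\mathbb{R},\langle x\rangle^{-2}dx)$ with $Hf$ in the same space by approximation: pick $f_n \in \mathcal{S}(\mathbb{R})$ with $f_n \to f$ and $Hf_n \to Hf$ in $L^1(\mathbb{R},\langle x\rangle^{-2}dx)$, apply the Schwartz-case formula to each $f_n$, and pass to the limit in the pointwise identity. The main obstacle is precisely this last passage: the double application of the modified Hilbert transform produces a delicate iterated principal-value integral, and one must justify the pointwise a.e.\ convergence $H(Hf_n)(x)\to H(Hf)(x)$. The $\langle x\rangle^{-2}$-weighting is tailored so that the integrals defining $H$ are absolutely convergent at infinity, and combined with the standard weak-$(1,1)$ / $L^2$ boundedness of the singular integral on the core, a dominated-convergence argument should close the gap; alternatively, one could work with the regularized versions $P_t$ and $\tilde{Q}_t$, use that $\tilde{Q}_t f \to Hf$ a.e.\ as $t\to 0^+$, and deduce the identity by passing to the boundary in the analytic function $z \mapsto P_t f(x) + i\tilde{Q}_t f(x)$ on $\mathbb{C}^+$.
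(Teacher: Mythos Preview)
The paper does not supply its own proof of this lemma: it is stated with a direct citation to Kober \cite[(3.2) and footnote p.~69]{MR0009067} and used as a black box. So there is no argument in the paper to compare against.

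Your computation in the Schwartz case is correct and clean: the splitting $Hf = H_0 f + C(f)$, the vanishing of $H$ on constants, and the identification $C(H_0 f) = \langle H_0 f, Q_1\rangle = -\langle f, H_0 Q_1\rangle = \langle f, P_1\rangle = P_1 f(0)$ via $H_0 Q_1 = -P_1$ all check out.

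The one point that is genuinely incomplete---and you are right to flag it---is the density step. The hypothesis is only that $f$ and $Hf$ lie in $L^1(\mathbb{R},\langle x\rangle^{-2}dx)$, and $H$ is \emph{not} a bounded operator on this space, so producing Schwartz $f_n$ with simultaneously $f_n\to f$ and $Hf_n\to Hf$ in $L^1(\langle x\rangle^{-2})$ is not automatic; a generic approximation of $f$ need not have $Hf_n$ converging to anything. Kober's original argument avoids this by working directly with the iterated kernel rather than by approximation, which is essentially the second route you sketch (going through the harmonic/conjugate-harmonic extension in $\mathbb{C}^+$ and passing to the boundary). If you want a self-contained proof, that analytic-extension approach is the safer one; the naive density argument, as stated, has a real gap.
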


We also need an analogue of \eqref{HQ} for the modified transforms $H$ and $\tilde{Q}_t$.

\begin{lem}
If  $f\in L^1(\mathbb{R},\langle x\rangle^{-2}dx)$ and  $Hf\in L^1(\mathbb{R},\langle x\rangle^{-2}dx)$, then 
\begin{equation}\label{HQt}Hf=\lim_{t\rightarrow 0^+} \tilde Q_tf.\end{equation}
\end{lem}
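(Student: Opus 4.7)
The plan is to reduce the lemma to the classical Fatou theorem for the Poisson integral by establishing the stronger pointwise identity
\[
\tilde Q_t f(x) = P_t(Hf)(x) \qquad \text{for all } t > 0,\ x \in \mathbb R.
\]
Once this is in hand, the conclusion follows at once: since $Hf \in L^1(\mathbb R, \langle z \rangle^{-2} dz)$ by assumption, the classical Fatou theorem (in the weighted $L^1$-space that is the natural domain of the Poisson transform) gives $P_t(Hf)(x) \to Hf(x)$ almost everywhere as $t \to 0^+$.

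The identity is transparent when $f \in L^2(\mathbb R)$: in that setting $\tilde Q_t f - Q_t f$ and $Hf - H_0 f$ equal the same constant $c_f = -Q_1(f)(0)$ (see \eqref{consf}), the classical relation \eqref{qth} gives $Q_t f = P_t(H_0 f)$, and $P_t$ reproduces constants; combining these yields $P_t(Hf) = P_t(H_0 f) + c_f = Q_t f + c_f = \tilde Q_t f$. This also suggests, as a backup strategy, extending from $L^2$ to the weighted $L^1$ setting by a truncation-plus-dominated-convergence argument (with $f_n := f\mathbf 1_{[-n,n]}\mathbf 1_{|f|\leq n}\in L^2$).

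For the general case $f, Hf \in L^1(\mathbb R, \langle x \rangle^{-2} dx)$, I would verify the identity by a direct Fubini calculation on $P_t(Hf)(x) = \int P_t(x-z)\, Hf(z)\, dz$. After swapping the order of integration, the inner $z$-integral can be computed via the same Fourier calculation used to prove \eqref{ptqs},
\[
\pv \int P_t(x-z)\,\frac{dz}{z-y} = \frac{x-y}{(x-y)^2+t^2},
\]
and together with $\int P_t(x-z)\,dz = 1$ this reproduces exactly the defining kernel of $\tilde Q_t f(x)$.

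The main obstacle is rigorously justifying the Fubini interchange, since $Hf$ is only defined as a principal value. I would handle this by first truncating the $y$-integration to the region $|z-y|\geq\varepsilon$: on this set the combined kernel $\frac{1}{z-y}+\frac{y}{y^2+1}=\frac{1+yz}{(z-y)(y^2+1)}$ is bounded and enjoys $O(1/y^2)$ decay at infinity, so Fubini applies unconditionally; one then passes to $\varepsilon \to 0^+$ on both sides via dominated convergence, controlled by the hypothesis $f \in L^1(\langle y \rangle^{-2} dy)$ and an elementary estimate $\left|\frac{x-y}{(x-y)^2+t^2}-\frac{1}{x-y}\right|\leq t^2/|x-y|^3$ to handle the $\tilde Q_t$ side outside the p.v.\ neighborhood.
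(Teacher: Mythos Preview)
Your approach is correct and genuinely different from the paper's. The paper argues directly in $L^2$: it writes $\tilde Q_t f = Q_t f - Q_1 f(0)$, invokes the classical limit $Q_t f \to H_0 f$ from~\eqref{HQ}, observes $H_0 f - Q_1 f(0) = Hf$, and then closes with a one-line density argument. You instead factor the problem into (i) the pointwise identity $\tilde Q_t f = P_t(Hf)$ and (ii) the Fatou theorem for Poisson integrals of functions in $L^1(\mathbb R,\langle x\rangle^{-2}dx)$. The advantage of your route is that step~(ii) is a completely standard result requiring no further work, and step~(i) is a clean algebraic identity that is of independent interest---indeed it is the natural companion to the paper's later relation $H(P_t f)=\tilde Q_t f + C_t(f)$ in~\eqref{hpf}, and it is pleasant that on this side no extra constant appears. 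The paper's route is slightly more economical in that it avoids proving the intermediate identity, but its closing ``density argument'' is left just as sketchy as your Fubini justification. One small slip: the relation $Q_t f = P_t(H_0 f)$ you use is the first (unnumbered) item of that lemma, not~\eqref{qth}.
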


\begin{proof}
Assume first that  $f\in L^2(\mathbb R)\subset L^1(\mathbb{R},\langle x\rangle^{-2}dx)$. From \eqref{e:hilbert} and \eqref{consf}, we deduce that for any $t>0$, we have
$$\lim_{t\rightarrow 0^+} \tilde Q_tf=\lim_{t\rightarrow 0^+}Q_tf-Q_1f(0)=H_0f-Q_1f(0).$$
By definition of $H$,
$$H_0f=Hf +Q_1f(0),$$
so we deduce that 
$$\lim_{t\rightarrow 0^+} \tilde Q_tf=H(f).$$
We conclude by a density argument.
\end{proof}
We also need an analogue of \eqref{ptqs} for the modified Conjugate Poisson transform.
\begin{lem}For $f\in L^1(\mathbb{R},\langle x\rangle^{-2}dx)$, and $s,t>0$, we have

\begin{equation}\label{ptqsm}\tilde Q_s (P_t f)=\tilde Q_{t+s}f+C_t(f),\end{equation}
where 
\begin{equation}\label{ctf}C_t(f)=-\int_{\mathbb R}\frac{x t (t+2)f(x)}{\left(x^2+1\right) \left(x^2+(t+1)^2\right)}dx.\end{equation}
\end{lem}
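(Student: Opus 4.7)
The plan is to unfold the definitions, swap the order of integration by Fubini, and then reduce the inner double integral to the unmodified convolution identity \eqref{ptqs} plus an explicit remainder.

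More precisely, I would start by writing
\begin{equation*}
\tilde Q_s(P_tf)(x)=\frac{1}{\pi}\int_{\mathbb{R}}\left(\frac{x-y}{(x-y)^2+s^2}+\frac{y}{y^2+1}\right)\left(\int_{\mathbb{R}}\frac{tf(z)}{\pi(t^2+(y-z)^2)}\,dz\right)dy,
\end{equation*}
and apply Fubini to write this as $\int f(z)\,I(x,z)\,dz$, where $I(x,z)$ is the kernel obtained by integrating in $y$ first. Writing $\tfrac{x-y}{(x-y)^2+s^2}=\pi Q_s(x-y)$, $\tfrac{y}{y^2+1}=\pi Q_1(y)$ and $\tfrac{t}{\pi(t^2+(y-z)^2)}=P_t(y-z)$, the inner integral splits as
\begin{equation*}
I(x,z)=\int Q_s(x-y)P_t(y-z)\,dy+\int Q_1(y)P_t(y-z)\,dy.
\end{equation*}
The first integral is, after the change of variable $u=y-z$, exactly the convolution $(Q_s*P_t)(x-z)$, which by \eqref{ptqs} equals $Q_{s+t}(x-z)$. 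The second integral is $(Q_1*P_t)(z)$ (using that $P_t$ is even), which by \eqref{ptqs} equals $Q_{1+t}(z)$.

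Once this is in place, the result is purely algebraic: one just has to recognize that
\begin{equation*}
Q_{1+t}(z)-Q_1(z)=\frac{1}{\pi}\cdot\frac{z(1-(1+t)^2)}{(z^2+1)(z^2+(1+t)^2)}=-\frac{1}{\pi}\cdot\frac{z\,t(t+2)}{(z^2+1)(z^2+(1+t)^2)},
\end{equation*}
so that
\begin{equation*}
I(x,z)=\underbrace{Q_{s+t}(x-z)+Q_1(z)}_{\text{kernel of }\tilde Q_{s+t}}-\frac{1}{\pi}\cdot\frac{z\,t(t+2)}{(z^2+1)(z^2+(1+t)^2)},
\end{equation*}
which gives \eqref{ptqsm} after integrating against $f$ (up to a factor $1/\pi$ in $C_t(f)$ that appears to be missing in the stated formula).

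The only genuine subtlety is the justification of Fubini. For $f\in L^1(\mathbb{R},\langle x\rangle^{-2}dx)$ the modified kernel is $O(|y|^{-2})$ at infinity (this is the whole point of Kober's modification), so the integrand in the double integral is absolutely integrable provided one checks that
\begin{equation*}
\int_{\mathbb{R}}\int_{\mathbb{R}}\left|\frac{x-y}{(x-y)^2+s^2}+\frac{y}{y^2+1}\right|\cdot\frac{t|f(z)|}{\pi(t^2+(y-z)^2)}\,dy\,dz<+\infty.
\end{equation*}
This is the main technical point: one bounds the bracketed expression by $C(x,s)(1+y^2)^{-1}$ for $|y|$ large (since the two fractions cancel at leading order in $1/y$) and by a constant uniformly near $x$ and $0$, and then uses that $P_tf$ itself belongs to $L^1(\mathbb{R},\langle y\rangle^{-2}dy)$ whenever $f$ does. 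Once Fubini is secured, the computation above is routine.
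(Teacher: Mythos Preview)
Your argument is correct. The direct Fubini computation, together with the convolution identity \eqref{ptqs} applied twice (once for $Q_s*P_t$ and once for $Q_1*P_t$), does exactly what is needed, and your sketch of the absolute-integrability check is sound: the Kober correction makes the $y$-kernel $O(\langle y\rangle^{-2})$, and then $\int (1+y^2)^{-1}(t^2+(y-z)^2)^{-1}\,dy=O(\langle z\rangle^{-2})$, which is precisely the integrability needed for $f\in L^1(\mathbb{R},\langle x\rangle^{-2}dx)$. Your observation about the missing $1/\pi$ in \eqref{ctf} is also correct: tracing the kernels with the paper's own normalization $Q_r(u)=\tfrac{u}{\pi(u^2+r^2)}$, one finds $Q_{1+t}(z)-Q_1(z)=-\tfrac{1}{\pi}\cdot\tfrac{zt(t+2)}{(z^2+1)(z^2+(1+t)^2)}$, so the constant should carry a $1/\pi$.

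The paper takes a different route: it first restricts to $f\in L^2(\mathbb{R})$, where both the unmodified $Q_s$ and the modified $\tilde Q_s$ make sense and differ by the constant $-Q_1f(0)$; then \eqref{ptqs} is applied at the operator level to get $\tilde Q_s(P_tf)=Q_{t+s}f-Q_{1+t}f(0)=\tilde Q_{t+s}f+(Q_1f(0)-Q_{1+t}f(0))$, after which the last bracket is computed explicitly. The general case is then obtained by a density argument. Compared with your approach, this avoids the Fubini verification entirely by working in a class where all objects are already well-defined and reducing to the previously established semigroup identity; on the other hand, your computation is more self-contained and does not rely on an (unstated) density step, which in the weighted space $L^1(\mathbb{R},\langle x\rangle^{-2}dx)$ requires a word about why both sides of \eqref{ptqsm} are continuous in $f$ for the relevant topology.
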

\begin{proof}
Remark first that if  $f\in L^1(\mathbb{R},\langle x\rangle^{-2}dx)$, then $P_tf \in L^1(\mathbb{R},\langle x\rangle^{-2}dx)$.

Assume first that  $f\in L^2(\mathbb R)\subset L^1(\mathbb{R},\langle x\rangle^{-2}dx)$. Then, by \eqref{tqtf} and \eqref{consf}, 
$$\tilde Q_s(P_t f)(x)=Q_s(P_tf)(x)-Q_1(P_t f)(0).$$
Using \eqref{ptqs} and one more time \eqref{tqtf} and \eqref{consf} leads to
$$\tilde Q_s(P_t f)(x)=Q_{t+s}f(x)-Q_{1+t} f(0)= \tilde Q_{t+s}f(x)+(Q_1f(0)-Q_{1+t} f(0)).$$
An explicit computation shows that 
$$Q_1f(0)-Q_{1+t} f(0)=-\int_{\mathbb{R}}\frac{xf(x)}{x^2+1}dx+\int_{\mathbb{R}}\frac{xf(x)}{x^2+(1+t)^2}ds=-\int_{\mathbb R}\frac{xt (t+2)f(x)}{\left(x^2+1\right) \left(x^2+(t+1)^2\right)}dx.$$
We conclude by a density argument.
\end{proof}

As a corollary, we have the following property.
\begin{cor}For any $t>0$, we have 
\begin{equation}\label{hpf}H( P_t(f))=\tilde Q_tf +C_t(f),\end{equation}
where $C_t(f)$ is defined in \eqref{ctf}.
\end{cor}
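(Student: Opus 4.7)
The plan is to let the parameter $s$ in \eqref{ptqsm} tend to $0^+$. Fix $t > 0$ and $f \in L^1(\mathbb{R}, \langle x\rangle^{-2} dx)$. The identity \eqref{ptqsm} reads
$$
\tilde{Q}_s(P_t f) = \tilde{Q}_{t+s} f + C_t(f)
$$
for every $s > 0$. On the right-hand side, $C_t(f)$ does not depend on $s$, and $\tilde{Q}_{t+s} f(x) \to \tilde{Q}_t f(x)$ as $s \to 0^+$ by dominated convergence in the defining integral \eqref{tqtf}: the kernel is continuous in $s$ on a neighborhood of $0$ and stays pointwise dominated, against $|f|(y)\,dy$, by an integrable majorant, since $t > 0$ keeps one away from the singularity of the conjugate Poisson kernel.

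On the left-hand side, the identity \eqref{HQt} applied to $g := P_t f$ would give $\lim_{s \to 0^+} \tilde{Q}_s g = Hg$, provided both $g$ and $Hg$ belong to $L^1(\langle x\rangle^{-2}dx)$. The first condition is the observation made at the start of the proof of \eqref{ptqsm}. Combining the two one-sided analyses then yields exactly \eqref{hpf}.

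The main obstacle is the verification of the second hypothesis $H(P_t f) \in L^1(\langle x\rangle^{-2} dx)$ of \eqref{HQt}, which is essentially the object one is trying to identify. I would circumvent this difficulty using the same density argument employed in the proofs of \eqref{HQt} and \eqref{ptqsm}: first treat $f \in L^2(\mathbb{R})$, where $P_t f \in L^2(\mathbb{R})$ automatically implies $H(P_t f) \in L^2(\mathbb{R}) \subset L^1(\langle x\rangle^{-2} dx)$, so that \eqref{HQt} applies directly and the desired identity holds; then extend to arbitrary $f \in L^1(\langle x\rangle^{-2}dx)$ by approximating $f$ by bounded, compactly supported functions (which lie in $L^2$) and passing to the pointwise limit on both sides of the identity, using the continuity of $\tilde{Q}_t$ and $C_t$ in $f$ together with the regularizing action of $P_t$ on the right-hand side.
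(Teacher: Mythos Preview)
Your proposal is correct and follows essentially the same approach as the paper: let $s\to 0^+$ in \eqref{ptqsm} and invoke \eqref{HQt}. You are in fact more careful than the paper, which states this in a single line without addressing the hypothesis $H(P_tf)\in L^1(\langle x\rangle^{-2}dx)$ needed for \eqref{HQt}; your density argument via $L^2$ is a reasonable way to fill that gap.
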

\begin{proof}For any $s,t>0$, we make $s\rightarrow 0^+$ in  \eqref{ptqsm}  and use \eqref{HQt} to 
obtain the desired result.
\end{proof}

To conclude this introducing section, for  $\Omega\in L^1(\mathbb{R};\langle x\rangle^{-2}dx)$, the functions $f$ of the form
\begin{equation}
\label{e:outer}
f=ae^{-(\Omega+iH(\Omega))},\quad |a|=1
\end{equation}
are called \emph{outer functions}. 
We will use the following lemma from \cite[\S 1.9]{bm7}, which gives a sufficient condition for a function to be the modulus of the Fourier transform of a function supported in $[0,\sigma]$ (see also \cite[Lemma 3.1]{MR4081109}). 

\begin{lem}
\label{p:modulus}
Assume that $\omega=e^{-\Omega}\in L^2$ and $\mathcal{L}(\omega)>-\infty$. In addition, we assume that $\omega^2e^{2\pi i\sigma x}$ is an outer function. Then there exists $\psi\in L^2$ with $\supp\psi\subset[0,\sigma]$ and $|\mathcal {F}{\psi}|=\omega$.
\end{lem}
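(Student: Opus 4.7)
The plan is to realize $\psi$ as the inverse Fourier transform of an outer function in $H^2(\mathbb{C}^+)$ (viewed on the Fourier side) whose translated partner has the correct Paley--Wiener support. First I would introduce the outer function
\begin{equation*}
F(x):=e^{-\Omega(x)-iH(\Omega)(x)},
\end{equation*}
which has $|F|=\omega$ on $\mathbb{R}$. The hypothesis $\mathcal L(\omega)>-\infty$ (equivalently, $\Omega\in L^1(\mathbb R,\langle x\rangle^{-2}dx)$) ensures that $H(\Omega)$ is defined a.e., so $F$ makes sense, and the hypothesis $\omega\in L^2$ gives $F\in L^2(\mathbb{R})$. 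Standard Hardy-space theory (Smirnov/Nevanlinna factorization) identifies this boundary datum with a genuine element of $H^2(\mathbb{C}^+)$. By the Paley--Wiener theorem, read through the physicist's Fourier convention \eqref{TF}, this means $F=\mathcal F\psi_0$ for some $\psi_0\in L^2$ with $\supp\psi_0\subset(-\infty,0]$. Taking complex conjugates, $\overline F$ is the boundary value of an element of $H^2(\mathbb{C}^-)$, so $\overline F=\mathcal F\psi_1$ for some $\psi_1\in L^2$ with $\supp\psi_1\subset[0,\infty)$.

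The second ingredient is an algebraic identity that follows from the outer hypothesis on $\omega^2 e^{2\pi i\sigma x}$. Observe that $F^2$ is itself outer in the sense of \eqref{e:outer} (with weight $2\Omega$) and has modulus $\omega^2$, while $\omega^2 e^{2\pi i\sigma x}$ is outer of the same modulus by assumption. The uniqueness of outer functions up to a unimodular factor gives $\omega^2 e^{2\pi i\sigma x}=\lambda F(x)^2$ for some $\lambda\in\mathbb{C}$ with $|\lambda|=1$. Combining with $F\overline F=\omega^2$ then yields the key identity
\begin{equation*}
\overline{F(x)}=\lambda^{-1}F(x)\,e^{-2\pi i\sigma x}, \quad x\in\mathbb{R}.
\end{equation*}

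To conclude, I would feed this identity back into the Paley--Wiener representations above. The shift property $\mathcal F[\psi_0(\cdot-\sigma)](\xi)=e^{-2\pi i\sigma\xi}\mathcal F\psi_0(\xi)=e^{-2\pi i\sigma\xi}F(\xi)$, combined with $\mathcal F\psi_1=\overline F=\lambda^{-1}F\,e^{-2\pi i\sigma\cdot}$, forces $\psi_1(x)=\lambda^{-1}\psi_0(x-\sigma)$ a.e. Since $\supp\psi_0\subset(-\infty,0]$ translates to $\supp\psi_0(\cdot-\sigma)\subset(-\infty,\sigma]$, and we already have $\supp\psi_1\subset[0,\infty)$, intersecting gives $\supp\psi_1\subset[0,\sigma]$. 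Setting $\psi:=\psi_1$ furnishes $\supp\psi\subset[0,\sigma]$ and $|\mathcal F\psi|=|\overline F|=\omega$, as required.

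The main obstacle is to carefully justify that the outer function built from the real-line formula \eqref{e:outer} extends to a function in $H^2(\mathbb{C}^+)$ in the classical Hardy-space sense, so that Paley--Wiener applies literally. This is where the hypotheses $\omega\in L^2$ and $\mathcal L(\omega)>-\infty$ both enter essentially: the former provides $L^2$-control of the boundary values, the latter makes the outer construction well-defined and links the paper's definition \eqref{e:outer} to the classical notion of outer function in $H^2$. Once this point is settled, the remainder is a routine manipulation of Fourier shifts.
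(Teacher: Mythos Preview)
The paper does not give its own proof of this lemma: it is simply quoted from \cite[\S 1.9]{bm7} (see also \cite[Lemma 3.1]{MR4081109}). So there is no argument in the paper to compare against.

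Your proposal is essentially correct and is, in fact, the standard argument behind this statement (and the one found in the cited references). The logic---build the outer function $F=e^{-\Omega-iH(\Omega)}$, identify it as the boundary value of an $H^2(\mathbb{C}^+)$ function via $\mathcal{L}(\omega)>-\infty$, use Paley--Wiener to put the inverse transform on a half-line, then exploit the outer hypothesis on $\omega^2e^{2\pi i\sigma x}$ together with $F\overline{F}=\omega^2$ to produce the reflection identity and hence confine the support to $[0,\sigma]$---is exactly the right route. One cosmetic slip: from $\omega^2e^{2\pi i\sigma x}=\lambda F^2$ and $F\overline{F}=\omega^2$ you obtain $\overline{F}=\lambda F\,e^{-2\pi i\sigma x}$, not $\lambda^{-1}$; since $|\lambda|=1$ this is immaterial to the conclusion. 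Your closing caveat about checking that the real-line definition \eqref{e:outer} really lands in $H^2(\mathbb{C}^+)$ is well placed: this is precisely the classical outer-function construction (Poisson/conjugate-Poisson extension of $\Omega$), and the hypotheses $\omega\in L^2$ and $\mathcal{L}(\omega)>-\infty$ are exactly what make it work.
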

\label{s:hilbert}

\section{An effective multiplier theorem}
\label{s:effbmm}

\subsection{Estimates for well-prepared weights}
Let us remind the following result, as stated and proved in \cite[Theorem 3.2]{H2}.
For the sake of simplicity, for a positive bounded measurable function $\omega$, we write 
\begin{equation}\label{Oo}\Omega=-\log(\omega).\end{equation}

\begin{thm}
\label{t:effect-multiplier}
Assume that  $0<\omega\leq 1$, $\omega$ satisfies $\mathcal{L}(\omega)>-\infty$, $H(\Omega)$ is differentiable, $H(\Omega)'$ is essentially bounded and
\begin{equation}
\label{e:hilbert-bd}
\|H(\Omega)'\|_{L^\infty}\leq \frac{\pi}{2}\sigma,
\end{equation}
where $0<\sigma<1/10$ and $H$ is the Hilbert transform defined in \eqref{e:hilbert}. Then there is $\psi\in L^2(\mathbb{R})$ with
\begin{equation}
\label{e:psi-supp}
\supp\psi\subset[0,\sigma],
\quad |\mathcal {F}{\psi}|\leq\omega,
\end{equation}
and 
\begin{equation}
\label{e:psi-lb}
\|\mathcal {F}{\psi}\|_{L^2[-1,1]}\geq \frac{\sigma^6}{600000}\min(\|\omega\|_{L^2(1/2,1)},\|\omega\|_{L^2(-1,-1/2)}).
\end{equation}
\end{thm}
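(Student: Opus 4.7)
The plan is to invoke Lemma~\ref{p:modulus} applied to a suitable modification $\tilde\omega\le\omega$ of $\omega$. Unpacking the outer-function condition on $\tilde\omega^2 e^{2\pi i\sigma x}$ via the definition \eqref{e:outer} with $\tilde\Omega=-\log\tilde\omega$, the boundary-phase identity becomes
\[
H(\tilde\Omega)(x)\equiv c-\pi\sigma x\pmod{\pi}
\]
for some real constant $c$. Once such $\tilde\omega$ is in hand, Lemma~\ref{p:modulus} yields $\psi\in L^2$ with $\supp\psi\subset[0,\sigma]$ and $|\mathcal{F}\psi|=\tilde\omega\le\omega$, giving the upper bound \eqref{e:psi-supp} immediately.

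To construct $\tilde\omega$ I would set $\tilde\Omega=\Omega+\delta$ with $\delta\ge 0$, reducing the problem to solving $H(\delta)(x)=-\pi\sigma x - H(\Omega)(x) + c$. The Lipschitz hypothesis \eqref{e:hilbert-bd} is exactly what makes this tractable: the right-hand side $g$ has derivative bounded between $-3\pi\sigma/2$ and $-\pi\sigma/2$, so it is a strictly decreasing Lipschitz function. Because $g$ grows linearly and therefore lies outside $L^1(\mathbb{R},\langle x\rangle^{-2}dx)$, one cannot simply invoke the inversion formula \eqref{rinv}; instead the construction has to be carried out in the upper half plane using the Poisson / conjugate-Poisson machinery of \S\ref{kober}. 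Concretely, I would build the holomorphic function on $\mathbb{C}^+$ whose boundary imaginary part is $c-\pi\sigma x$ and whose boundary real part is then read off as $-\tilde\Omega$, making sense of everything through \eqref{HQt} and \eqref{hpf} combined with a limit $t\to 0^+$. The bound on $H(\Omega)'$ controls $\|\delta\|_{L^\infty[-1,1]}$ through a Poisson-integral estimate and ensures the sign condition $\delta\ge 0$.

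The lower bound \eqref{e:psi-lb} should then follow from the identity $|\mathcal{F}\psi(x)|=\tilde\omega(x)$ combined with a quantitative estimate of the form $\tilde\omega\gtrsim \sigma^{6}\omega$ on at least one of the intervals $(-1,-1/2)$ or $(1/2,1)$ coming from the construction of $\delta$. The asymmetry between these two intervals is intrinsic, arising because the linear term $-\pi\sigma x$ in the phase enters with a definite sign---making $\delta$ potentially small on one side and large on the other---so the $\min$ in \eqref{e:psi-lb} simply selects the favorable side. Squaring and integrating on that chosen subinterval, which is contained in $[-1,1]$, yields \eqref{e:psi-lb} with an explicit constant. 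The main obstacle is the second step: rigorously realizing $\delta$ as a solution of $H(\delta)=g$ when $g$ has linear growth forces one to work entirely at the level of harmonic extensions to $\mathbb{C}^+$, handling the ``constant'' ambiguities from the modification terms in \eqref{rinv} and \eqref{ptqsm}, and it is the careful tracking of the Poisson-kernel estimates for $\delta$ together with the interval geometry that eventually produces the explicit $\sigma^6/600000$ factor.
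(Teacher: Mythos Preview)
Your overall setup is right: reduce to Lemma~\ref{p:modulus}, and recognize that the outer--function requirement on $\tilde\omega^2 e^{2\pi i\sigma x}$ is the congruence
\[
H(\tilde\Omega)(x)\equiv c-\pi\sigma x\pmod{\pi}.
\]
The gap is in the very next line. You replace this congruence by the \emph{exact} equation $H(\delta)=-\pi\sigma x-H(\Omega)+c$ and then try to solve it. That exact equation is the wrong target, and your proposed route to it does not work: the right-hand side has genuine linear growth, while the modified transform $H$ of \eqref{e:hilbert} is only defined on $L^1(\mathbb R,\langle x\rangle^{-2}dx)$ and does not produce functions with linear growth from admissible data. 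Passing to harmonic extensions on $\mathbb C^+$ does not rescue this, because the machinery of \S\ref{kober} (formulas \eqref{HQt}, \eqref{ptqsm}, \eqref{hpf}) is again set up for boundary data in $L^1(\mathbb R,\langle x\rangle^{-2}dx)$; the linear term $-\pi\sigma x$ simply falls outside that framework. So the step you flag as ``the main obstacle'' is in fact an obstruction, not merely a technicality.

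The paper (following \cite{MR4081109}) exploits the congruence rather than the equality. One sets $s_0(x)=\pi\sigma x+H(\Omega)(x)$, which is monotone by \eqref{e:hilbert-bd}, and strips off integer multiples of $\pi$:
\[
s(x)=s_0(x)-\pi k(x)-\tfrac{\pi}{2},\qquad k(x)=\lfloor\pi^{-1}s_0(x)\rfloor.
\]
Now $s\in L^\infty\subset L^1(\mathbb R,\langle x\rangle^{-2}dx)$, so $M:=H(s)$ is perfectly well defined, and by the inversion formula \eqref{rinv} one gets $H(M)=-s+\text{const}$, hence $H(\Omega+M)\equiv -\pi\sigma x+\text{const}\pmod\pi$ because $e^{2\pi i k(x)}=1$. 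This ``atomization'' is the key idea you are missing; it turns an unsolvable linear-growth equation into a trivial bounded one. Secondly, the explicit $\sigma^6$ does not come from Poisson estimates on your $\delta$: it comes from a preliminary regularization $\omega_0(x)=\omega(x)/(x^2+A^2)^3$ with $A\sim 1/\sigma$, inserted to force $\tilde\omega\in L^2$, and the factor $(1+A^2)^{-3}\sim\sigma^6$ is then read off directly in the lower bound. The two-sided estimate on $M$ that pins down which of $(1/2,1)$ or $(-1,-1/2)$ works is a separate lemma (\cite[Lemma~3.4]{MR4081109}) relying on the monotonicity of $k$.
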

 
In fact, this theorem can be slightly straightened as follows.
\begin{thm}
\label{t:effect-multiplier2}
Assume that $\omega$ is positive, essentially bounded, satisfies $\mathcal{L}(\omega)>-\infty$ and is such that $H(\Omega)$ is differentiable, $H(\Omega)'$ is essentially bounded and 
\begin{equation}
\label{e:hilbert-bd2}
\|H(\Omega)'\|_{L^\infty}\leq\pi \sigma',
\end{equation}
where $0<\sigma'<\sigma<1/10$, and $H$ is the Hilbert transform defined in \eqref{e:hilbert}. Then there is $\psi\in L^2(\mathbb{R})$ with
\begin{equation}
\label{e:psi-supp2}
\supp\psi\subset[0,\sigma],
\quad |\mathcal {F}{\psi}|\leq\omega,
\end{equation}
and on one of the interval $(-1,-1/2)$ or $(1/2,1)$, we have
\begin{equation}
\label{e:psi-lb2}
|\mathcal {F}{\psi}(x)|\geq C (\sigma-\sigma')^6 \omega(x),
\end{equation}
for some numerical constant $C>0$.
\end{thm}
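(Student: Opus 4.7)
The theorem refines Theorem \ref{t:effect-multiplier} in two ways: it drops the normalization $\omega\le 1$, and it decouples the ``Hilbert-transform scale'' $\sigma'$ from the ``Paley--Wiener scale'' $\sigma$. I would prove it by revisiting the proof of Theorem \ref{t:effect-multiplier} given in \cite{MR4081109} and making both modifications explicit. The first step is the reduction to $\omega\le 1$ by scaling: set $M=\|\omega\|_{L^\infty}$ and $\widetilde\omega=\omega/M\in(0,1]$, so $\widetilde\Omega=\Omega+\log M$. A direct computation from \eqref{e:hilbert}, splitting the kernel $(x-y)^{-1}+y(y^2+1)^{-1}$ and using both the oddness of $y/(y^2+1)$ and the principal-value cancellation in $\pv\int dy/(x-y)$, shows that $H(c)\equiv 0$ for every constant $c$; hence $H(\widetilde\Omega)'=H(\Omega)'$ and the hypothesis \eqref{e:hilbert-bd2} is preserved. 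If $\widetilde\psi$ is produced for $\widetilde\omega$, then $\psi:=M\widetilde\psi$ works for $\omega$, both sides of \eqref{e:psi-supp2} and \eqref{e:psi-lb2} rescaling by the same factor $M$.

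The second, more substantial step is to expose the two scales $\sigma$ and $\sigma'$ in the proof of Theorem \ref{t:effect-multiplier}. That proof constructs $\mathcal{F}\psi$ as a product $F\cdot g$, where $F$ is the outer function with $|F(x)|=\omega(x)$ on $\mathbb R$ whose phase is governed by $H(\Omega)$ via the conjugate Poisson extension (cf.\ \eqref{HQt} and \eqref{hpf}), and $g$ is an explicit auxiliary entire function satisfying $|g|\le 1$ on $\mathbb R$ and $|g(x)|\ge C(\sigma-\sigma')^6$ on at least one of the intervals $(-1,-1/2)$ and $(1/2,1)$. The assumption $\|H(\Omega)'\|_{L^\infty}\le\pi\sigma'$ allows the outer part to carry exponential type $\pi\sigma'$, leaving $\pi(\sigma-\sigma')$ available for $g$, and Lemma \ref{p:modulus} then packages the data as an $L^2$ function supported in $[0,\sigma]$. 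The pointwise lower bound \eqref{e:psi-lb2} follows immediately from $|\mathcal F\psi(x)|=|F(x)|\,|g(x)|\ge C(\sigma-\sigma')^6\omega(x)$ on the chosen half-unit sub-interval; the $L^2$ version stated in Theorem \ref{t:effect-multiplier} is recovered from this pointwise bound by integration, and the original statement corresponds to the balanced choice $\sigma-\sigma'=\sigma/2$.

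The main obstacle is bookkeeping: tracking the two parameters $\sigma$ and $\sigma'$ independently throughout the construction of the auxiliary bump $g$, and checking that the only place $(\sigma-\sigma')^6$ arises in the estimates is in the normalization of $g$. No fundamentally new ingredient beyond those of \cite{MR4081109} is needed, but the explicit construction of $g$ (as a suitably normalized entire function of exponential type $\pi(\sigma-\sigma')$, bounded above by $1$ on the real line and bounded below by the sixth power of its type on a fixed half-unit interval) must be carried through with uniform estimates in both parameters.
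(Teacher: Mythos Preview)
Your first reduction (dividing by $\|\omega\|_{L^\infty}$ and noting that $H$ annihilates constants) is fine, though the paper does not separate this out; the same device that yields the $(\sigma-\sigma')^6$ factor also handles the lack of normalization.

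The second part misidentifies the structure of the argument. The construction in \cite{MR4081109}, and its refinement here, does \emph{not} factor $\mathcal F\psi$ as an outer function $F$ with $|F|=\omega$ times an auxiliary entire bump $g$ of exponential type $\pi(\sigma-\sigma')$. In particular, the outer function with modulus $\omega$ does not ``carry exponential type $\pi\sigma'$'' in any usable sense: the hypothesis $\|H(\Omega)'\|_{L^\infty}\le\pi\sigma'$ bounds the derivative of the phase on the real line, not the exponential growth of an entire factor. No entire $g$ with the properties you list is ever built, and you give no construction of one.

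What the paper actually does is modify the weight: set $\omega_0(x)=\omega(x)/(x^2+A^2)^3$ with $A=6/(\pi(\sigma-\sigma'))$. Since $H(\log(x^2+A^2))'(x)=-2A/(x^2+A^2)$, this raises $\|H(\Omega_0)'\|_{L^\infty}$ by at most $6/A=\pi(\sigma-\sigma')$, bringing it to $\pi\sigma$. One then runs the \emph{entire} atomization construction (the floor-function step $s_0\mapsto s$, $m=e^{-H(s)}$, $\widetilde\omega=m\omega_0$) at the single scale $\sigma$ and applies Lemma~\ref{p:modulus} to obtain $|\mathcal F\psi|=\widetilde\omega$. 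The factor $(\sigma-\sigma')^6$ is precisely $1/(x^2+A^2)^3\asymp A^{-6}$ on the bounded interval --- the price of the rational weight modification, not of a Paley--Wiener bump sitting in the type gap. So the ``bookkeeping'' you anticipate is not the obstacle; the mechanism itself is different from what you describe, and your sketch would not go through without discovering it.
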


\begin{proof}

We just explain here the main modifications in the proof of \cite[Theorem 3.2]{MR4081109}. The first step is to modify a little bit the weight $\omega$ as follows, to get extra integrability at infinity: we set
\begin{equation}\label{w0}
\omega_0(x)=\frac{\omega(x)}{(x^2+A^2)^3}, \quad \Omega_0=-\log\omega_0,
\end{equation}
for $A>0$ to be chosen later on large enough. Remark that by definition, we have 
\begin{equation*}
\Omega_0=\Omega+3\log(x^2+A^2).
\end{equation*}
Let $f(x)=\log(x^2+A^2)$, then 
\begin{equation*}
f'(x)=\frac{2x}{x^2+A^2}
\end{equation*}
and an easy explicit computation gives 
\begin{equation}
\label{e:example1}
H(f)'(x)=H_0(f')(x)=-\frac{2A}{x^2+A^2}.
\end{equation}
By linearity of  $H$,  \eqref{e:example1} and \eqref{e:hilbert-bd2}, we deduce that 
\begin{equation}
\label{e:hilbert-bd3}
\|H(\Omega_0)'\|_{L^\infty}\leq\|H(\Omega)'\|_{L^\infty}
+3\|H(\log(x^2+A^2))'\|_{L^\infty}\leq \pi\sigma'+\frac{6}{A}.
\end{equation} 
Choosing 
\begin{equation}\label{cA}A=\frac{6}{\pi(\sigma-\sigma')}\end{equation}
gives that 
\begin{equation}
\label{e:hilbert-bd4}
\|H(\Omega_0)'\|_{L^\infty}\leq \pi\sigma.
\end{equation}

Let $m=e^{-M}$, where $M=H(s)$ and $s$ is the function defined by
\begin{equation*}
s(x)=s_0(x)-\pi k(x)-\frac{\pi}{2},
\end{equation*}
where
\begin{equation*}
s_0(x)=\pi\sigma x+H(\Omega)(x),
\quad
k(x)=\lfloor\pi^{-1}s_0(x)\rfloor.
\end{equation*}
We remark that $s_0$ is monotone increasing by condition \eqref{e:hilbert-bd4} and so is $k$. Moreover, $k$ only takes integer values. Now, we have $s\in L^\infty$ and thus in $L^1(\mathbb{R},\langle x\rangle^{-2}dx)$. Moreover, $\|s\|_{L^\infty}\leq\frac{\pi}{2}$. 
We then set 
\begin{equation}\label{wt}\tilde\omega = m\omega_0.\end{equation}
One readily verifies that \cite[Lemma 3.4]{MR4081109} still applies and consequently, 
\begin{equation}
\label{e:mbd-1}
M(x)\geq -\frac{1}{2}-3\log(|x|+2|)\end{equation}
and on one of the interval $(-1,-1/2)$ and $(1/2,1)$, we have 
\begin{equation}
\label{e:mbd-2}
|M(x)|\leq \frac{1}{2}+3\log(3).
\end{equation}

Now, we note that by \eqref{e:mbd-1} and \eqref{w0}, we have
\begin{equation*}
0< \widetilde{\omega}\leq\sqrt{e}(|x|+2)^3\omega_0\leq\sqrt{e}\frac{(|x|+2)^3}{(x^2+A^2)^3}\omega.
\end{equation*} 
Since $A>3$ by \eqref{cA}, the fact that $0<\sigma-\sigma'<1/10$ , and the fact that for all $x\in \mathbb R$, we have
$$(|x|+2)^3\leqslant (x^2+3^2)^2,$$
we deduce that 
$$0\leq\widetilde{\omega}\leq\frac{\omega}{x^2+A^2},$$ and notably, $\widetilde{\omega}\in L^2(\mathbb R)\subset L^1(\mathbb R,\langle x\rangle^{-2}dx)$. Moreover, \eqref{e:mbd-2} also implies that
\begin{equation*}
\mathcal{L}(\widetilde{\omega})=\mathcal{L}(m)+\mathcal{L}(\omega_0)>-\infty.
\end{equation*}
From the construction of $M=H(s)$ and the inversion formula \eqref{rinv}, we see that
\begin{equation*}
H(-2M-2\Omega_0)
=2s-2H(\Omega_0)-2c(M)
=2\pi\sigma x-2\pi k(x)-\pi-2c(M),
\end{equation*}
where $k(x)$ always takes integer values (so that $e^{2\pi ik(x)}=1$) and $c(M)$ is a constant. Therefore for some constant $a$ with $|a|=1$,
\begin{equation*}
\widetilde{\omega}^2e^{2\pi i\sigma x}=e^{-2M-2\Omega_0+2\pi i\sigma x}=ae^{-2M-2\Omega_0+iH(-2M-2\Omega_0)},
\end{equation*}
which shows that $\widetilde{\omega}^2e^{2\pi i\sigma x}$ is an outer function.

Now by Lemma \ref{p:modulus}, there exists $\psi\in L^2$ supported in $[0,\sigma]$ with $|\mathcal {F}{\psi}|=\widetilde{\omega}\leq\omega$. Moreover, on $(1/2,1)$, by \eqref{e:mbd-1}, \eqref{cA} and the fact that $A>4$, we have for some positive constant $C>0$,
\begin{equation*}
|\mathcal {F}{\psi}(x)|=\widetilde{\omega}(x)\geq \frac{1}{27\sqrt{e}(1+A^2)^3}\omega(x)
\geq C(\sigma-\sigma')^6\omega(x),
\end{equation*}
which gives the lower bound \eqref{e:psi-lb2} and concludes the proof.

\end{proof}
\subsection{Transforming a weight into a well-prepared weight}
Now, we explain how to give good estimates on $P_t\Omega$ that ensure that it ``resembles'' $\Omega$ and verifies \eqref{e:hilbert-bd2} for a precise $t>0$.
Remind the notation \eqref{Oo}.
\begin{thm}\label{thmstep2}
Assume that $\omega$ is positive, essentially bounded, $\mathcal L(\omega)<+\infty$, and that \eqref{HC} is verified.
Then, for 
\begin{equation}\label{goodt}t=\left(\frac{K_0}{\pi\sigma' \cos \left(\frac{\pi  \alpha}{2}\right)}\right)^{\frac{1}{1-\alpha}},\end{equation}
we have 
 \begin{equation}\label{paOo}|P_t\Omega(x)-\Omega(x)|\leqslant\left(\frac{K_0}{\cos \left(\frac{\pi  \alpha}{2}\right)}\right)^{\frac{1}{1-\alpha}}\left(\frac{1}{\pi\sigma' }\right)^{\frac{\alpha}{1-\alpha}},
  \end{equation}
and 
\begin{equation}\label{e:hilbert-pt}\|(H(P_t\Omega))'\|_{L^\infty}\leq\pi \sigma'.\end{equation}
\end{thm}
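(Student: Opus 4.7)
The plan is to split the theorem into its two assertions and attack both with essentially one elementary tool --- the Hölder hypothesis \eqref{HC} combined with the classical identity $\int_\mathbb{R}|v|^\alpha/(1+v^2)\,dv = \pi/\cos(\pi\alpha/2)$, which follows from a Beta-function/reflection-formula computation.

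For \eqref{paOo}, the first step is to exploit that $P_t$ is a probability kernel: since $\int_\mathbb{R}P_t = 1$, one can write
$$P_t\Omega(x) - \Omega(x) = \int_\mathbb{R}P_t(x-y)\bigl(\Omega(y)-\Omega(x)\bigr)\,dy.$$
Taking absolute values, using \eqref{HC}, and then scaling $y = x + tv$ reduces the estimate to
$$|P_t\Omega(x) - \Omega(x)| \le \frac{K_0 t^\alpha}{\pi}\int_\mathbb{R}\frac{|v|^\alpha}{1+v^2}\,dv = \frac{K_0 t^\alpha}{\cos(\pi\alpha/2)}.$$
With $t$ chosen as in \eqref{goodt}, the identity $t^\alpha = t/t^{1-\alpha}$ turns this into the right-hand side of \eqref{paOo}.

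For \eqref{e:hilbert-pt}, I would begin from \eqref{hpf}: since the correction $C_t(\Omega)$ in \eqref{ctf} does not depend on $x$, one has $(H(P_t\Omega))'(x) = (\tilde Q_t\Omega)'(x)$. Differentiating \eqref{tqtf} under the integral sign --- justified because \eqref{HC} implies the growth bound $|\Omega(y)| \le |\Omega(0)| + K_0|y|^\alpha$, which makes the differentiated integrand decay like $|y|^{\alpha-2}$ at infinity --- produces
$$(\tilde Q_t\Omega)'(x) = \frac{1}{\pi}\int_\mathbb{R}\Omega(y)\,\frac{t^2-(x-y)^2}{\bigl((x-y)^2+t^2\bigr)^2}\,dy.$$
The kernel $u\mapsto (t^2-u^2)/(u^2+t^2)^2$ equals $\partial_u\bigl(u/(u^2+t^2)\bigr)$, so its improper integral over $\mathbb{R}$ vanishes; this allows me to replace $\Omega(y)$ by $\Omega(y)-\Omega(x)$ inside the integral. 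Applying \eqref{HC} together with the crude but pivotal bound $|t^2-u^2|\le t^2+u^2$ collapses the expression to the same integral appearing in Part~1: after $u = tv$,
$$|(\tilde Q_t\Omega)'(x)| \le \frac{K_0}{\pi}\int_\mathbb{R}\frac{|u|^\alpha}{u^2+t^2}\,du = \frac{K_0\,t^{\alpha-1}}{\cos(\pi\alpha/2)},$$
and inserting \eqref{goodt} gives exactly $\pi\sigma'$.

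The main subtleties are technical rather than conceptual: justifying the differentiation under the integral in $(\tilde Q_t\Omega)'$ and interpreting the vanishing of $\int(t^2-u^2)/(u^2+t^2)^2\,du$ in the improper sense in order to legitimately subtract $\Omega(x)$. The key design choice is the crude bound $|t^2-u^2|\le t^2+u^2$: a sharper analysis would require evaluating $\int|v|^\alpha|1-v^2|/(1+v^2)^2\,dv$, which does not simplify to $\pi/\cos(\pi\alpha/2)$ (already at $\alpha=0$ this integral equals $2$ rather than $\pi$). This deliberate looseness is exactly what makes a single elementary integral control both \eqref{paOo} and \eqref{e:hilbert-pt}, and it is what forces the specific value of $t$ in \eqref{goodt}, which is precisely the one that balances the two bounds.
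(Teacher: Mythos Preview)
Your proof is correct and follows essentially the same route as the paper's: both parts hinge on the Hölder condition together with the identity $\int_{\mathbb R}|v|^\alpha/(1+v^2)\,dv=\pi/\cos(\pi\alpha/2)$, and for \eqref{e:hilbert-pt} both arguments pass through \eqref{hpf}, differentiate $\tilde Q_t\Omega$, use the mean-zero property of the kernel to subtract $\Omega(x)$, and then apply the crude bound $|t^2-u^2|\le t^2+u^2$. The only cosmetic difference is that you observe the vanishing of $\int(t^2-u^2)/(u^2+t^2)^2\,du$ by recognizing the integrand as $\partial_u\bigl(u/(u^2+t^2)\bigr)$, whereas the paper obtains it via the change of variables $y\mapsto t^2/y$; your justification of differentiation under the integral via the growth bound $|\Omega(y)|\le|\Omega(0)|+K_0|y|^\alpha$ is a welcome addition that the paper leaves implicit.
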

\begin{proof}

First of all, let us remark that for any $x\in\mathbb R$ and any $t>0$, we have
$$\Omega(x)=\frac{t}{\pi}\int_{\mathbb R} \frac{\Omega(x)}{t^2+(x-y)^2}dy.$$
Taking into account \eqref{HC}, \eqref{ptf} and \eqref{Oo}, we deduce that  
$$|P_t\Omega(x)-\Omega(x)|\leqslant \frac{t}{\pi}\int_{\mathbb R} \frac{|\Omega(x)-\Omega(y)|}{t^2+(x-y)^2}dy\leqslant \frac{K_0t}{\pi}\int_{\mathbb R} \frac{|x-y|^\alpha}{t^2+(x-y)^2}dy.$$
An explicit computation shows that 
$$\int_{\mathbb R} \frac{|x-y|^\alpha}{t^2+(x-y)^2}dy=\frac{t^\alpha}{\cos{\frac{\pi\alpha}{2}}}.$$
We deduce that 
\begin{equation}\label{ptOo}|P_t\Omega(x)-\Omega(x)|\leqslant \frac{K_0t^\alpha}{\cos{\frac{\pi\alpha}{2}}}.\end{equation}
Now, remark that by \eqref{hpf}, we have 
$$H(P_t\Omega)(x)= \tilde Q_t\Omega(x)+C_t(\Omega).$$
By the definition of $\tilde Q_t\Omega$ given in \eqref{tqtf}, it is very easy to see that $\tilde Q_t\Omega$ is differentiable and that we have 
$$H(P_t\Omega)'(x)=(\tilde Q_t\Omega)'(x)=-\int_{\mathbb{R}}\Omega(y)\left(\frac{t^2-(x-y)^2}{\pi((x-y)^2+t^2)^2}\right)dy,$$
that we rewrite as 
$$H(P_t\Omega)'(x)=\int_{\mathbb{R}}\Omega(x-y)\left(\frac{t^2-y^2}{\pi(y^2+t^2)^2}\right)dy.$$
The change of variables $y=t^2/x$ implies that for any $t>0$,
$$\int_{\mathbb R} \frac{y^2}{\pi(y^2+t^2)^2}dy=\int_{\mathbb R} \frac{t^2}{\pi(x^2+t^2)^2}dx,$$
so that
$$\int_{\mathbb R} \frac{t^2-y^2}{\pi(y^2+t^2)^2}dy=0,\,\forall t>0.$$
We deduce that
$$H(P_t\Omega)'(x)=\frac{1}{\pi}\int_{\mathbb{R}}\left(\Omega(x-y)-\Omega(x)\right)\left(\frac{t^2-y^2}{\pi(y^2+t^2)^2}\right)dy.$$

Notably, from \eqref{Oo} and \eqref{HC}, we deduce that 
\begin{equation}\label{perte1}|\tilde Q_t(\Omega)'(x)| \leqslant \frac{K_0}{\pi}\int_{\mathbb{R}}|y|^\alpha\left(\frac{|t^2-y^2|}{\pi(y^2+t^2)^2}\right)dy.\end{equation}
So
\begin{equation}\label{perte2}|\tilde Q_t(\Omega)'(x)|\leqslant \frac{K_0}{\pi}\int_{\mathbb{R}}\frac{|y|^\alpha}{(x-y)^2+t^2}dy= \frac{K_0t^{\alpha-1}}{\cos{\frac{\pi\alpha}{2}}}.\end{equation}
Keeping in mind that we want \eqref{e:hilbert-pt} to be verified, we deduce that we can take $t$ as in \eqref{goodt}.
So \eqref{ptOo} becomes \eqref{paOo}, and the proof is completed.
\end{proof}
\subsection{Proof of Theorem \ref{th:main}}
Assume all the hypotheses of Theorem \ref{th:main}. Consider $P_t \Omega$ with $t$ as in \eqref{goodt}. Notably, \eqref{e:hilbert-pt} holds, so one can apply Theorem  \ref{t:effect-multiplier2}, where $\omega$ is replaced by $\exp(-P_t\Omega)$, to obtain that  
for $0<\sigma'<\sigma<1/10$, there is $\psi\in L^2(\mathbb{R})$ with
\begin{equation}
\label{m1}
\supp\psi\subset[0,\sigma],
\quad |\mathcal {F}{\psi}|\leq\exp(-P_t\Omega),
\end{equation}
and on one of the interval $(-1,-1/2)$ or $(1/2,1)$, we have
\begin{equation}
\label{m2}
|\mathcal {F}{\psi}(x)|\geq C (\sigma-\sigma')^6 \exp(-P_t\Omega(x)),
\end{equation}
for some numerical constant $C>0$. Using \eqref{m1}, \eqref{m2} and \eqref{paOo} leads to the desired result.

\subsection{Application for a particular weight}
\label{sec:part}
We apply the above strategy for a particular weight (related to our application on the Schr\"odinger equation). In fact, we do not apply directly Theorem \ref{th:main} as a black box. Indeed, here we will have explicit expressions, so that our second step (transforming the weight into a well-prepared weight)  will lead to explicit computations that are better than what we obtained in Theorem \ref{thmstep2} (remark notably that we lose a lot between \eqref{perte1} and \eqref{perte2}). From now on, we consider 
\begin{equation}\label{oms}\omega(x)= \left\{\begin{aligned} e^{-\sqrt{2\pi x}},&\,\,\,x\geqslant 0,\\1,&\,\,\,x<0,\end{aligned} \right. \end{equation}
so that $\Omega$ defined in \eqref{Oo} is given by 
\begin{equation}\label{Oms}\Omega(x) = \sqrt{2\pi x}\mathbbm 1_{(0,+\infty)}(x).\end{equation}

We have the following result.
\begin{thm}\label{ms}
For any $T>0$, for any $\varepsilon\in (0,1)$, there exists $\psi \in L^2(\mathbb R)$ such that 
$$
\supp\psi\subset[0,T],
\quad |\mathcal {F}{\psi}|\leq \omega,
$$
and on one of the interval $(-1,-1/2)$ or $(1/2,1)$, we have
$$
|\mathcal {F}{\psi}(x)|\geq C T^6(1-\varepsilon)^6\omega(x)e^{-\frac{3^{\frac{3}{4}}}{4T(1-\varepsilon)}},$$
for some numerical constant $C>0$.
\end{thm}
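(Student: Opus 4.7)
The plan is to follow the two-step blueprint of Theorem~\ref{th:main} but, as suggested by the author, replace the generic estimates of Theorem~\ref{thmstep2} by sharp closed-form computations that are available because $\Omega(x)=\sqrt{2\pi x}_+$ is the boundary value of an explicit analytic function. Specifically, the principal branch of $G(z):=\sqrt{2\pi z}$ is holomorphic on $\mathbb{C}^+$, and writing $z=x+it$ with $t>0$ one computes
\[
G(x+it)=\sqrt{\pi(\sqrt{x^2+t^2}+x)}+i\,\sqrt{\pi(\sqrt{x^2+t^2}-x)}.
\]
Since $\Re G$ is a bounded-at-infinity (in the relevant weighted sense) harmonic extension of $\Omega$ to $\mathbb{C}^+$, we identify $P_t\Omega(x)=\Re G(x+it)=\sqrt{\pi(\sqrt{x^2+t^2}+x)}$. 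Combining \eqref{hpf} with the Cauchy--Riemann equations, the derivative of $H(P_t\Omega)$ is the same as the derivative of $-\Im G(x+it)$, i.e.\ any additive constants $C_t(\Omega)$ produced by the regularized transform drop out upon differentiation.

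Next I would establish the two quantitative estimates that are the improvement over Theorem~\ref{thmstep2}. For the Poisson side, the elementary inequality $\sqrt{x^2+t^2}+x\geq 2x_+$ gives $P_t\Omega\geq\Omega$ on all of $\mathbb{R}$, and a short one-variable analysis (monotonicity of $P_t\Omega-\Omega$ on $(-\infty,0]$ and $[0,\infty)$ respectively) shows that the maximum of $P_t\Omega-\Omega$ is attained at $x=0$ and equals $\sqrt{\pi t}$, hence
\[
0\leq P_t\Omega(x)-\Omega(x)\leq\sqrt{\pi t}.
\]
For the Hilbert side, differentiating $-\Im G(x+it)$ yields
\[
\bigl|(H(P_t\Omega))'(x)\bigr|=\frac{\sqrt{\pi}}{2}\cdot\frac{\sqrt{\sqrt{x^2+t^2}-x}}{\sqrt{x^2+t^2}},
\]
and the substitution $x=t\sinh u$ reduces the optimization to maximizing $e^{-u/2}/\cosh u$. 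The critical equation $\tanh u=-1/2$, i.e.\ $u=-\tfrac{1}{2}\ln 3$, then produces
\[
\|(H(P_t\Omega))'\|_{L^\infty(\mathbb{R})}=\frac{\sqrt{\pi}\,3^{3/4}}{4\sqrt{t}}.
\]

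Finally, given $0<\sigma'<\sigma<1/10$, I would choose $t$ so that equality holds in \eqref{e:hilbert-bd2} for the weight $e^{-P_t\Omega}$, namely $\sqrt{t}=\frac{3^{3/4}}{4\sqrt{\pi}\,\sigma'}$, so that $\sqrt{\pi t}=\frac{3^{3/4}}{4\sigma'}$. Applying Theorem~\ref{t:effect-multiplier2} to $e^{-P_t\Omega}$ produces $\psi\in L^2(\mathbb{R})$ with $\supp\psi\subset[0,\sigma]$ and $|\mathcal{F}\psi|\leq e^{-P_t\Omega}\leq e^{-\Omega}=\omega$, the last inequality using $P_t\Omega\geq\Omega$. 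On one of the intervals $(-1,-1/2)$ or $(1/2,1)$ we have
\[
|\mathcal{F}\psi(x)|\geq C(\sigma-\sigma')^6 e^{-P_t\Omega(x)}\geq C(\sigma-\sigma')^6\,\omega(x)\,e^{-\sqrt{\pi t}}=C(\sigma-\sigma')^6\,\omega(x)\,\exp\!\left(-\frac{3^{3/4}}{4\sigma'}\right).
\]
Choosing the pair $(\sigma,\sigma')$ in terms of $(T,\varepsilon)$ so that $\sigma\leq T$, $\sigma-\sigma'$ is proportional to $T(1-\varepsilon)$ and $\sigma'$ is proportional to $T(1-\varepsilon)$ then yields the claimed bound (for $T$ small; arbitrary $T>0$ follows by restricting the support).

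The main obstacle, and the source of the numerical constant $3^{3/4}/4$ in the statement, is the explicit computation of $\|(H(P_t\Omega))'\|_{L^\infty}$: it is precisely here that the generic bound \eqref{perte2} from the proof of Theorem~\ref{thmstep2} loses a constant, whereas the closed form of $G$ lets one perform the optimization exactly. A secondary technical point is that $\Omega$ is not in $L^1(\mathbb{R},\langle x\rangle^{-1}dx)$, so the identification of $H(P_t\Omega)'$ with $-\tfrac{d}{dx}\Im G(x+it)$ must go through the modified transform $\tilde Q_t$ of Subsection~\ref{kober} and formula \eqref{hpf}, the additive constant $C_t(\Omega)$ being irrelevant after differentiation.
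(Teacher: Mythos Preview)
Your proposal is correct and follows essentially the same route as the paper. The paper also computes $P_t\Omega(x)=\sqrt{\pi(\sqrt{x^2+t^2}+x)}$ and $\tilde Q_t\Omega(x)=-\sqrt{\pi}+\sqrt{\pi(\sqrt{x^2+t^2}-x)}$ explicitly (you obtain these as the real and imaginary parts of $G(x+it)=\sqrt{2\pi(x+it)}$, which is a clean way to phrase the same computation), then optimizes $|H(P_t\Omega)'|$ to get the same value $\frac{\sqrt{\pi}\,3^{3/4}}{4\sqrt t}$ (the paper differentiates once more and locates the critical points at $x=\pm t/\sqrt 3$, which corresponds exactly to your $\tanh u=-1/2$), proves the two-sided bound $0\le P_t\Omega-\Omega\le\sqrt{\pi t}$ with the maximum at $x=0$, and then applies Theorem~\ref{t:effect-multiplier2}. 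The only point where you are vaguer than the paper is the final calibration: the paper simply takes $\sigma=T$ and $\sigma'=T(1-\varepsilon)$, which gives $\sqrt{\pi t}=\dfrac{3^{3/4}}{4T(1-\varepsilon)}$ directly, rather than leaving the proportionality constants unspecified.
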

\begin{proof}

Let $t>0$. An explicit computation shows that 
\begin{equation}\label{exprg}P_t\Omega(x)=\sqrt{\pi\left(\sqrt{x^2+t^2}+x\right)},\,\forall x\in\mathbb R.\end{equation}
By \eqref{hpf}, we have 
\begin{equation}\label{hpf2}H(P_t\Omega)=\tilde Q_t \Omega +C_t(\Omega).\end{equation}
An explicit computation shows that for any $t>0$ and $x\in\mathbb R$, we have
$$\tilde Q_t\Omega(x)=-\sqrt{\pi}+\sqrt{\pi\left (\sqrt{x^2+t^2}-x\right )}.$$
Hence, by \eqref{hpf2}, we deduce that 
\begin{equation}\label{expHg}				
H(P_t\Omega)'(x)=-\frac{\sqrt{\pi\left(\sqrt{t^2+x^2}-x\right)}}{2 \sqrt{t^2+x^2}}.
\end{equation}
By differentiating one more time, we see that $H(P_t\Omega)''$ has two zeros given by $\pm t/\sqrt{3}$. Inspecting the value of $H(P_t\Omega)'$ at these two points, we discover that 
$$||H(P_t\Omega)'||_\infty=\frac{\sqrt{\pi}3^{3/4}}{4 \sqrt{t}},$$
reached for $x=-t/\sqrt{3}$.

Let $\sigma=T$ and $\sigma'=T(1-\varepsilon)$. According to condition \eqref{e:hilbert-bd2}, for any $T>0$ small enough and any $\varepsilon>0$ close enough to $0$ (in an independent way of $T$), we choose $t$ in such a way that 
$$\frac{\sqrt{\pi}3^{3/4}}{4 \sqrt{t}}=\pi T(1-\varepsilon),$$  
\textit{i.e.}
\begin{equation}\label{choixc}t=\frac{3 \sqrt{3}}{16 \pi  (T(1-\varepsilon))^2}.\end{equation}
Investigating the behavior of $P_t\Omega(x)$ and $\Omega$ leads to the fact that we always have 
$$P_t\Omega(x)-\Omega(x)\geqslant 0,$$
and that the maximum of $P_t\Omega(x)-\Omega(x)$ is reached for $x=0$.
We deduce that 
\begin{equation}\label{doubleP}\Omega(x)\leqslant P_t(x)\leqslant \Omega(x)+\sqrt{\pi t}.\end{equation}
Following  the proof of Theorem \ref{th:main} and  using notably \eqref{m1} and \eqref{m2}, we obtain the desired result, taking into account \eqref{choixc} and \eqref{doubleP}.
\end{proof}
\section{Application to the Schr\"odinger equation}
\label{s:intro}

\subsection{The controlled Schr\"odinger equation at one end} 
Let $T>0$ and $L>0$. In what follows, we will consider the following controlled Schr\"odinger equation on $(0,T)\times(0,L)$, with Dirichlet boundary conditions:
\begin{equation}\label{schr}
\left\{
\begin{aligned}
iy_t+y_{xx}&=0&\mbox{ in } (0,T)\times(0,L),&\\
y(0,\cdot)&=y^0&\mbox{ in }(0,L),& \\
y(\cdot,0)&=u(t)&\mbox{ in }(0,T),&\\y(\cdot,L)&=0&\mbox{ in }(0,T),
\end{aligned}
\right .
\end{equation}
where $y^0\in H^{-1}(0,L)$ and  $u\in L^2(0,T)$ is the control (here, all the functional spaces are complex-valued). It is well-known that \eqref{schr} is a well-posed linear control system (see \textit{e.g.} \cite[Section 7.1]{TW}). Notably, there exists $C(T,L)>0$ such that for any $y^0\in H^{-1}(0,L)$ and any $u\in L^2(0,T)$, we have 
$$||y(t,\cdot)||_{L^2((0,T),H^{-1}(0,L))}\leqslant C(T,L) \left ( ||y_0||_{H^{-1}(0,L)}+||u||_{L^2(0,T)} \right ).$$
Moreover, \eqref{schr} is well-known to be null-controllable at any time $T>0$, in the sense that for any $y^0\in H^{-1}(0,L)$, there exists $u\in L^2(0,T)$ such that 
$y(T,\cdot)=0$ (in fact, we even have a stronger property of exact controllability, see \textit{e.g.} \cite[Corollary 8.2.4]{TW}).

Hence, one can easily prove (see for example \cite[Chapter 2, Section 2.3]{coron2009control}) that for every $y^0\in H^{-1}(0,L)$, there exists a unique optimal (for the $L^2(0,T)$-norm) control $u_{opt}\in L^2(0,T)$ bringing $y^0$ to the equilibrium state $0$. Moreover, the map $y^0\mapsto u_{opt}$ is then linear continuous. The norm of this operator is called the optimal null control cost at time $T$ (or in a more concise form the cost of the control), denoted $C_{S}(T,L)$. Let us remind that $C_{S}(T,L)$  is  also the smallest constant $C>0$ such that for every $y^0\in H^{-1}(0,L)$, there exists some control $u$ driving $y^0$ to $0$ at time $T$ with $$||u||_{L^2(0,T)}\leqslant C ||y^0||_{H^{-1}(0,L)}.$$
One important open problem is to find precise  asymptotic estimates on $C_S(T,L)$ as $T$ goes to $0$ (this is what we call ``the cost of fast controls''), and more precisely, here, we will prove a new upper bound on this quantity.

Apart from its theoretical interest (that notably comes from the simplicity of the problem statement, that turns out to be in reality a difficult question), understanding well the behaviour of the cost of controllability of linear partial differential equations might have an interest for the study of control problems in singular limits (for instance vanishing viscosity or vanishing dispersion as in \cite{05aa} or \cite{MR2571687}), discretization problems (as in \cite{MR1912914}), or in order to apply fix point methods to obtain local or global results for semilinear heat equations (as in \cite{ferandezzuazua2000}). Moreover, by a usual duality argument (see \cite[Proposition 2.42]{coron2009control}), $C_S(T,L)$ is also related to the inverse problem of reconstructing the initial condition of the free Schr\" odinger equation 

\begin{equation*}
\left\{
\begin{aligned}
i\varphi_t+\varphi_{xx}&=0&\mbox{ in } (0,T)\times(0,L),&\\
\varphi(0,\cdot)&=\varphi_0&\mbox{ in }(0,L),& \\
\varphi(\cdot,0)&=0&\mbox{ in }(0,T),&\\\varphi(\cdot,L)&=0&\mbox{ in }(0,T),
\end{aligned}
\right .
\end{equation*}
where $\varphi_0 \in H^1_0(0,L)$, from a Neumann measurement at point $x=0$, in  the sense that $C_S(T,L)$ is also the ``best'' constant $C>0$ such that for any $\varphi_0\in H^1_0(0,L)$, we have 
$$\int_0^L |\varphi_0'(x)|^2dx\leqslant C\int_0^T |\partial_x \varphi(t,0)|^2dx,$$ 
so that in some sense, $C_S(T,L)$ also quantifies the precision of the reconstruction of $\varphi'_0$ by measuring the boundary data $\partial_x \varphi(t,0)$.
\subsection{State of the art}

 The dependence in small time of the cost of the boundary control is roughly under the form $\simeq \exp({\beta}/{T})$ for small $T$ and some constant $\beta>0$, that depends on $L$. A good way to catch precisely the cost of fast controls is to introduce the quantities 
 $$\beta_-=\liminf_{T\rightarrow 0^+} T\log (C_S(T,L)),\,\,\beta_+=\limsup_{T\rightarrow 0^+} T\log (C_S(T,L)).$$
 The constants $\beta_{\pm}$ verify 
 $$L^2/4\leqslant \beta_-\leqslant \beta_+\leqslant 3L^2/2.$$  
 
 The upper bound is obtained in \cite{TT} and the lower bound in \cite{MR2062431} (see also \cite{PL14,PL15,PL17} for various lower and upper bounds for fractional versions of the Schr\"odinger equation, recovering notably the above bounds in the case of the usual Schr\"odinger equation). These estimates on $\beta$ are the best that are known up to now. We conjecture that the lower bound is optimal, \textit{i.e.} that one can choose 
$$\beta_-=\beta_+=L^2/4.$$ 
Here, we are able to improve drastically  the upper bound  $\beta^+\leqslant 3L^2/2$ given in \cite{TT}.
\begin{thm} \label{th:3} $\beta_+$ verifies 
\begin{equation}\label{beta}\beta_+\leqslant  \frac{\sqrt[4]{27}  }{4}L^2<0,56987677L^2.\end{equation}
\end{thm}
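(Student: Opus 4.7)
The plan is to use duality (see \cite[Proposition 2.42]{coron2009control}) to reduce the bound on $C_S(T_c,L)$ to the construction, with good $L^2$-norms, of a biorthogonal family $\{q_k\}_{k\ge 1}\subset L^2(0,T_c)$ to the exponentials $\{e^{-i\lambda_k t}\}$, where $\lambda_k=k^2\pi^2/L^2$ are the Dirichlet-Laplacian eigenvalues. The crucial match is that the weight $\omega$ of Theorem \ref{ms} is tuned exactly to this spectrum: $\omega(\lambda_k/(2\pi))=e^{-\sqrt{\lambda_k}}=e^{-k\pi/L}$.

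Fix $\alpha>L$ (to be sent to $L$ later). Apply Theorem \ref{ms} with $T=T_c/\alpha^2$ and rescale via $\tilde\psi(t):=\psi(t/\alpha^2)$, producing a function supported in $[0,T_c]$ with
\begin{equation*}
|\mathcal F\tilde\psi(\xi)|\le \alpha^2 e^{-\alpha\sqrt{2\pi\xi}}\ (\xi\ge 0),\qquad |\mathcal F\tilde\psi(\xi)|\le \alpha^2\ (\xi<0),
\end{equation*}
together with a lower bound of order $T_c^6\alpha^{-10}(1-\varepsilon)^6\exp\bigl(-\tfrac{3^{3/4}\alpha^2}{4T_c(1-\varepsilon)}\bigr)$ at some explicit point $\tilde y_0$ of magnitude $\sim\alpha^{-2}$. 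A reflection $\tilde\psi_r(t):=\tilde\psi(T_c-t)$ sends the exponential decay to $\xi\to-\infty$, which is the growth side of the canonical sine-type product $G(\xi)=\sin(L\sqrt{2\pi\xi})/(L\sqrt{2\pi\xi})$ whose zero set is exactly $\{\lambda_k/(2\pi):k\ge 1\}$. The biorthogonal functions are then defined by
\begin{equation*}
\mathcal Fq_k(\xi)=\frac{G(\xi)}{(\xi-\lambda_k/(2\pi))\,G'(\lambda_k/(2\pi))}\cdot\frac{\mathcal F\tilde\psi_r(\xi-(\lambda_k/(2\pi)-\tilde y_0'))}{\mathcal F\tilde\psi_r(\tilde y_0')},
\end{equation*}
which satisfies $\mathcal Fq_k(\lambda_j/(2\pi))=\delta_{jk}$ by construction. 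The hypothesis $\alpha>L$ is used precisely to kill the growth $G(\xi)\sim e^{L\sqrt{-2\pi\xi}}/\sqrt{|\xi|}$ of the Weierstrass product at $\xi\to-\infty$ by the decay of the shifted reflected multiplier, so $\mathcal Fq_k\in L^2(\mathbb R)$; Paley--Wiener then delivers $q_k\in L^2$ supported in $[0,T_c]$.

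The final step is a uniform-in-$k$ estimate on $\|q_k\|_{L^2}$, for which one uses $|G'(\lambda_k/(2\pi))|=L^2/(k^2\pi)$, the explicit lower bound on $\mathcal F\tilde\psi_r(\tilde y_0')$, and the exponential smallness of $\mathcal F\tilde\psi_r$ on the $(-\infty)$-side to control the $L^2$-integral. Summing these estimates with the $1/\lambda_k$-weighting inherent to the $H^{-1}$-duality yields
\begin{equation*}
C_S(T_c,L)\le P(L,\alpha,\varepsilon,T_c)\,\exp\!\left(\frac{3^{3/4}\alpha^2}{4T_c(1-\varepsilon)}\right)
\end{equation*}
for some rational function $P$. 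Hence $\limsup_{T_c\to 0^+}T_c\log C_S(T_c,L)\le 3^{3/4}\alpha^2/[4(1-\varepsilon)]$, and letting $\alpha\downarrow L$ and $\varepsilon\downarrow 0$ produces $\beta_+\le 3^{3/4}L^2/4=\sqrt[4]{27}\,L^2/4$. The principal obstacle is this uniform-in-$k$ estimate: the $k^4$-growth of $1/|G'(\lambda_k/(2\pi))|^2$ must be exactly counterbalanced by cancellations in the integral, and since Theorem \ref{ms} provides a lower bound only at a single unspecified point $\tilde y_0$, one is forced to translate the multiplier by $\lambda_k/(2\pi)-\tilde y_0'\sim k^2$, whose effect on the final $L^2$-integral demands a careful exploitation of the one-sided decay furnished by the asymmetric weight, which is precisely what makes the strategy sharper than the symmetric constructions underlying the $3L^2/2$ bound of \cite{TT}.
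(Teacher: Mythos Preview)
Your overall strategy---moment method, canonical sine-type product, multiplier from Theorem~\ref{ms}, and the key translation of the multiplier by $\sim\lambda_k$---is exactly the paper's approach. The rescaling by $\alpha>L$ plays the same role as the paper's replacement of $\omega$ by $\omega_\varepsilon(x)=e^{-(1+\varepsilon)\sqrt{2\pi x}}$ (after normalizing $L=1$): both create a margin so that the shifted multiplier beats the exponential growth of the product. The reflection is merely a sign convention.

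There is, however, a genuine gap on the \emph{non-growth} side. In your $\mathcal Fq_k$, for $\xi$ beyond the shift (where the translated multiplier is only bounded, not decaying), you are left with the factor $G(\xi)/[(\xi-\lambda_k/(2\pi))G'(\lambda_k/(2\pi))]$. This quotient is uniformly bounded pointwise (the $k^2$ from $1/|G'|$ is already absorbed---that is the content of the paper's estimates $|P_l(x)|\le 4$ for $x\le 0$), but its $L^2$-norm on the half-line $\xi>\lambda_k/(2\pi)$ grows like a positive power of $k$, because the quotient is of order one on an interval of length $\sim\lambda_k$. The ``cancellations'' you invoke are not available here, and the one-sided exponential decay of the multiplier lies on the \emph{other} side. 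The paper repairs this by inserting an additional factor
\[
\frac{i\bigl(e^{-2\pi i\varepsilon T(z+\mu_l)}-1\bigr)}{2\pi\varepsilon T(z+\mu_l)}
\]
(the Fourier transform of $\mathbbm 1_{[0,\varepsilon T]}$) into the multiplier $g_l$; this costs an $\varepsilon T$ of the support budget but supplies the missing $1/(1+|x+\mu_l|)$ decay on the bounded side, yielding the uniform pointwise bound $|g_lP_l(x)|\le R_\varepsilon(T)\,e^{3^{3/4}(1+\varepsilon)/(4T(1-\varepsilon)^2)}/(1+|x+\mu_l|)$. With that in hand, the paper does \emph{not} rely on a $1/\lambda_k$-weighting as you suggest, but instead estimates the cross-terms via the explicit integral
\[
\int_{\mathbb R}\frac{dx}{(1+|x+\mu_n|)(1+|x+\mu_m|)}\ \lesssim\ \frac{1}{1+|n-m|}
\]
and closes with Hilbert's inequality. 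Without the sinc factor (or an equivalent device), the uniform-in-$k$ estimate that you correctly identify as the principal obstacle does fail.
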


\subsection{Proof of Theorem \ref{th:3}}

From now on, we assume without loss of generality that $L=1$ (an easy scaling argument enables to recover the general case). In order to study more precisely the cost of the control in the asymptotic $T\rightarrow 0^+$ (what we  call the cost of fast controls), we will use some reformulation of our problem, associated to the spectral decomposition of the Dirichlet-Laplace operator, that is called \emph{the moment method} in control theory and was firstly introduced in \cite{71FR} for the study of the boundary controllability of the 1D heat equation.

 Let us consider the 1D Laplace operator $\Delta_D$ with domain $D(\Delta):=H^1_0(0,1)$ and state space $H:=H^{-1}(0,1)$. It is well-known that $-\Delta_D:D(\Delta_D)\rightarrow H^{-1}(0,1)$ is a positive definite operator with compact resolvent, the $k$-th eigenvalue is 
\begin{equation}\label{lk}\lambda_k={k^2\pi^2},\end{equation}
with (normalized in $H^{-1}$-norm) eigenvector
$$e_k(x):=\sqrt{2}k\pi\sin\left({k\pi x}\right).$$
Moreover, the control operator is given by: for any $\varphi\in \mathcal D(\Delta_D)$,
$$b(\varphi)=-(\Delta_D^{-1}\varphi)'(0),$$
\textit{i.e.}
$$b:=\delta_0'\circ\Delta^{-1},$$
so that  \eqref{schr}  can be rewritten in an abstract way as 
\begin{equation*}
\left\{
\begin{aligned}
iy_t+ \Delta y&=bu&\mbox{ in } (0,T)\times(0,1),&\\
y(0,\cdot)&=y^0&\mbox{ in }(0,1).& \\
\end{aligned}
\right .
\end{equation*}

Let us decompose our initial condition on our Hilbert basis of eigenfunctions:
$$y^0(x)=\sum_{k=1}^\infty a_k e_k(x),$$
with $(a_k)_{k\in\mathbb N^*}\in l^2(\mathbb N^*)$. Let us also define 
$b_k=\langle b,e_k\rangle_{H^{-1}(0,1)}$.
Then, it is well-known that  we have for all $k\in\mathbb N^*$ and $t\in [0,T]$ that
$$\langle y(t),e_k\rangle _{H}=a_ke^{-i\lambda_k t}+b_k\int_0^te^{-i\lambda_k(t-s)} u(s)ds.$$
Imposing that $y(T,\cdot) =0$ is then equivalent to imposing that for every $k\in\mathbb N^*$, one has 
$$a_ke^{-i\lambda_k T}+b_k\int_0^Te^{-i\lambda_k(T-s)} u(s)ds=0,$$ \textit{i.e.}
\begin{equation}\label{eqr}\int_0^Te^{i\lambda_k s} u(s)ds=-\frac{a_k}{b_k},\end{equation}
the right-hand side being well-defined and in  $l^2(\mathbb N^*)$ as soon as for some $C>0$ independent on $k$,
\begin{equation}\label{bk}|b_k|\geqslant C,\end{equation} which is the case here (see \textit{e.g.} \cite[Proof of Corollary 3.2]{TT}).
Hence, if we assume that we are able to exhibit a \emph{bi-orthogonal} family to $\{t\mapsto e^{i\lambda_k t}\}$ in $L^2(0,T)$, \textit{i.e.} a family of functions  $\{\psi_m\}_{m\in \mathbb N^*}$ such that 
for every  $(k,l)\in(\mathbb N^*)^2$ one has \begin{gather}\label{biorth}\langle e^{i\lambda_k t},\psi_l\rangle _{L^2(0,T)}=\delta_{kl},\end{gather} then one can use (at least formally)  as a control function 
\begin{equation}\label{ccc}u(t):=-\sum_{l\in\mathbb N ^*}\frac{a_l}{b_l}\psi_l(t).\end{equation}
Moreover, as soon as $u$ makes sense in $L^2(0,T)$, it is indeed a control function for which the corresponding solution of \eqref{schr} verifies $y(T,\cdot)=0$ (thanks to \eqref{biorth} and the equivalent reformulation \eqref{eqr}).
Remark that \eqref{biorth} can be rewritten as 
$$\int_0^T e^{2i\pi\mu_kt}\psi_l(t)dt=\delta_{kl},$$
where 
\begin{equation}\label{muk}\mu_k=\frac{\pi k^2}{2}.\end{equation}
From the normalization of the Fourier transform chosen in \eqref{TF}, this means that for any $k,l\in\mathbb N^*$, \eqref{biorth} is equivalent to 
\begin{equation}\label{gge}\mathcal F(\psi_l)\left (-\mu_k \right ) = \delta_{kl},\,\,\mbox{supp} (\mathcal F(\psi_l)) \subset [0,T].\end{equation}

Our goal is firstly to construct $\mathcal F(\psi_l)$, and then, deduce $\psi_l$ by Fourier inversion, so that \eqref{gge} is verified.  As usual, we introduce the canonical product 
$$Q(z)=\prod_{k=1}^{+\infty}\left (1+\frac{z}{\mu_k}\right),$$
and

\begin{equation}\label{PL}P_l(z)= \frac{Q(z)}{(z+\mu_l)Q'(-\mu_l)} .\end{equation}
Of course, $P_l$ is constructed so that
\begin{equation}\label{pl0}P_l\left (-\mu_k \right ) = \delta_{kl}.\end{equation}
So it is a good candidate for $\mathcal F(\psi_l)$. 
An explicit computation gives that 
\begin{equation}\label{expl}P_l(z)=2(-1)^{l+1}\mu_l\frac{\sinh \left(\sqrt{2 \pi z}\right)}{\sqrt{2 \pi z}(z+\mu_l)}.\end{equation}
Let us study the growth of $P_l$ on the real axis.
\begin{itemize}
\item If $x>0$,  from the easy estimate 
$$\frac{\sinh \left(\sqrt{2 \pi x}\right)}{\sqrt{2 \pi x}}\leqslant e^{\sqrt{2\pi x}}, $$
we obtain by \eqref{expl} that
\begin{equation}\label{p1}|P_l(x)|\leqslant \frac{\mu_l}{(x+\mu_l)}2e^{\sqrt{2\pi x}}\leqslant 2e^{\sqrt{2\pi x}},\end{equation}
since $x>0$.
\item if $x\leqslant 0$, the reasoning is a little bit more involved. 
\begin{itemize}
\item Assume that $x\leqslant -\mu_l/2$. We remark, using \eqref{muk}, that 
$$|\sinh \left(\sqrt{2 \pi x}\right)|=|\sin \left(\sqrt{2 \pi |x|}-l\pi\right)|\leqslant \sqrt{2 \pi}|(\sqrt{ |x|}-\sqrt{\mu_l})| .$$
Moreover, $$|x+\mu_l|=|\mu_l-|x||=(\sqrt{|x|}+\sqrt{\mu_l})|(\sqrt{|x|}-\sqrt{\mu_l})|.$$
Going back to \eqref{expl}, we deduce, using that $|x|\geqslant \mu_l/2$,
\begin{equation}\label{p3}|P_l(x)|\leqslant 2\mu_l\frac{\sqrt{2\pi}}{\sqrt{2 \pi |x|}(\sqrt{|x|}+\sqrt{\mu_l})|}\leqslant   2\mu_l\frac{1}{\sqrt{\mu_l/2}(\sqrt{\mu_l/2}+\sqrt{\mu_l})|}\leqslant 2.\end{equation}
\item To conclude, if $x\in [-\mu_l/2,0]$, we use
$$\left |\frac{\sinh \left(\sqrt{2 \pi x}\right)}{\sqrt{2 \pi x}} \right |\leqslant 1,$$
so, \eqref{expl}  and $|x|\leqslant \mu_l/2$ imply that 
\begin{equation}\label{p4}|P_l(x)|\leqslant 2\mu_l\frac{1}{\mu_l-|x|}\leqslant 2\mu_l\frac{1}{\mu_l/2}\leqslant 4.\end{equation}
\end{itemize}
\end{itemize}

To summarize, combining \eqref{p1}, \eqref{p3} and \eqref{p4}, we have notably proved that   

\begin{equation}\label{ple}|P_l(x)|\leqslant 4 \omega^{-1}(x),\,x\in\mathbb R,\end{equation}
where $\omega$ is the particular weight \eqref{oms}. Moreover, $|P_l|$ explodes exponentially in $\sqrt{x}$ as $x\rightarrow +\infty$, so $\mathcal F^{-1}(P_l)$ will not necessarily make sense as a function. Hence, in order to apply $\mathcal F^{-1}$ (to create our $\psi_l$), we have to use a \emph{multiplier} that compensates this bad behaviour at infinity. This is where we need Theorem \ref{ms}. Let $\varepsilon \in (0,1)$. We change a little bit $\omega$ as 

\begin{equation}\label{omse}\omega_\varepsilon(x)= \left\{\begin{aligned} e^{-\sqrt{2\pi x}(1+\varepsilon)},&\,\,\,x\geqslant 0,\\1,&\,\,\,x<0,\end{aligned} \right. \end{equation}
A little modification in the proof of Theorem \ref{ms} easily leads to the fact that  there exists $\psi \in L^2(\mathbb R)$ such that 
  
\begin{equation}\label{ez1}
\supp\psi\subset[0,T(1-\varepsilon)],
\quad |\mathcal {F}{\psi}|\leq \omega_\varepsilon,
\end{equation}
and on one of the interval $(-1,-1/2)$ or $(1/2,1)$, we have
\begin{equation}\label{ez2}
|\mathcal {F}{\psi}(x)|\geq C_\varepsilon T^6\omega_\varepsilon(x)e^{-\frac{3^{\frac{3}{4}}(1+\varepsilon)}{4T(1-\varepsilon)^2}},\end{equation}
for some $C_\varepsilon$ depending on $\varepsilon$ but not on $T$.
We call $m=\pm 1/4$, depending if \eqref{ez2} is verified on  $(\pm1,\pm1/2)$  (if it is verified on both, we choose by convention $m=1/4$).
Now, we introduce 
\begin{equation}\label{defgl}g_l(z)=\frac{\mathcal {F}{\psi}(z-m+\mu_l)}{{\mathcal F{\psi}(m)}}\left (\frac{i(e^{-2 i \pi \varepsilon  T (z+\mu_l)}-1)}{2 \pi \varepsilon T (z+\mu_l)}\right).\end{equation}
By construction, $g_lP_l$ is an entire function that verifies  $g_lP_l(-\mu_k)=\delta_{kl}$ by \eqref{pl0}. Moreover, from \eqref{ple}, \eqref{ez1} and \eqref{ez2}, we deduce that for any $x\in\mathbb R$,
\begin{equation}\label{glpl}|g_l P_l(x)|\leqslant R_\varepsilon(T)e^{\frac{3^{\frac{3}{4}}(1+\varepsilon)}{4T(1-\varepsilon)^2}}\frac{\omega_{\varepsilon}(x-m+\mu_l)}{\omega(x)}\frac{|\sin(2\pi T \varepsilon (x+\mu_l))|}{|x+\mu_l|},\end{equation}
where $R_\varepsilon(T)$ is from now on a fractional function of $T$, whose coefficients might depend on $\varepsilon$, involving numerical powers of $T$ that are independent on $\varepsilon$, that might change from line to line.
Let us investigate three different regimes.
\begin{itemize}
\item $x-m+\mu_l\leqslant 0$. Since $\mu_l>m$, we also have $x< 0$, so \eqref{oms}, \eqref{omse} and \eqref{glpl} give
\begin{equation}\label{glpl1}|g_l P_l(x)|\leqslant R_\varepsilon(T)e^{\frac{3^{\frac{3}{4}}(1+\varepsilon)}{4T(1-\varepsilon)^2}}\frac{|\sin(2\pi T \varepsilon (x+\mu_l))|}{|x+\mu_l|}.\end{equation}
\item $x> m-\mu_l$ and $x<0$. \eqref{oms}, \eqref{omse} and \eqref{glpl} give
\begin{equation}\label{glpl2}|g_l P_l(x)|\leqslant R_\varepsilon(T)e^{\frac{3^{\frac{3}{4}}(1+\varepsilon)}{4T(1-\varepsilon)^2}}e^{-\sqrt{2\pi( x-m+\mu_l)}(1+\varepsilon)}.\end{equation}
\item $x\geqslant 0$.  Since $\mu_l\geqslant m$, we also have $x-m+\mu_l\geqslant 0$, so \eqref{oms}, \eqref{omse} and \eqref{glpl} give
\begin{equation}\label{glpl3}|g_l P_l(x)|\leqslant R_\varepsilon(T)e^{\frac{3^{\frac{3}{4}}(1+\varepsilon)}{4T(1-\varepsilon)^2}} e^{-\varepsilon\sqrt{2\pi(x-m+\mu_l)}}.\end{equation}
\end{itemize}
Combining \eqref{glpl1}, \eqref{glpl2} and \eqref{glpl3} easily leads to the fact that $g_lP_l \in L^2(\mathbb R)$, so that 
\begin{equation}\label{biglpl}g_lP_l=\mathcal F(\psi_l),\end{equation}
for some $\psi_l \in L^2(\mathbb R)$.
Since $\supp\psi\subset[0,(1-\varepsilon) T]$ by \eqref{ez1}, since $\left (\frac{i(e^{-2 i \pi \varepsilon  T (x-\mu_l)}-1)}{2 \pi \varepsilon T (x-\mu_l)}\right)$ is a multiple of the Fourier transform of $\mathbbm 1_{[0,\varepsilon T]}$, and since $P_l$ is an entire function of exponential type $0$, it is easy to infer from \eqref{defgl} that $\psi_l$ defined by \eqref{biglpl} is supported in $[0,T]$.
Moreover, since $m\leqslant \mu_l/2$, we have, for $x\geqslant 0$,
$$x-m+\mu_l\geqslant x+\frac{\mu_l}{2}\geqslant \frac{x+\mu_l}{2}.$$

Hence, \eqref{glpl1}, \eqref{glpl2} and \eqref{glpl3}  notably imply that
\begin{equation}\label{finb}
|g_l P_l(x)|\leqslant \frac{R_\varepsilon(T)}{1+|x+\mu_l|}e^{\frac{3^{\frac{3}{4}}(1+\varepsilon)}{4T(1-\varepsilon)^2}},\,\forall x\in\mathbb R.
\end{equation}
Now, let us go back to the definition of the control \eqref{ccc}, with each $\psi_l$ given by \eqref{biglpl}. Each $\psi_l$ being supported in $[0,T]$, this is also the case for $u$, as soon as $u$ is well-defined. From now on, $C>0$ is a numerical constant that might change from  inequality to inequality.
We have that 
$$\int_0^T |u(t)|^2dt\leqslant \sum_{n,m\in\mathbb N^*} \frac{|a_na_m|}{|b_nb_m|}\int_{0}^T |\psi_n(t)\psi_m(t)|dt.$$
From  \eqref{biglpl}, \eqref{finb}, \eqref{bk} and the Plancherel Theorem, we deduce that 
\begin{equation}\label{prem}\int_0^T |u(t)|^2dt\leqslant {R_\varepsilon(T)}e^{2\frac{3^{\frac{3}{4}}(1+\varepsilon)}{4T(1-\varepsilon)^2}} \sum_{n,m\in\mathbb N^*} |a_na_m|\int_{\mathbb R}\frac{1}{(1+|x+\mu_n|)(1+|x+\mu_m|)}dx.\end{equation}

For $a>b>0$, an explicit computation gives that 
$$\int_{\mathbb R}\frac{1}{(1+|x+a|)(1+|x+b|)}dx=\frac{4 (a-b+1) \log (a-b+1)}{(a-b) (a-b+2)},$$
whereas if $a=b>0$, we have 
$$\int_{\mathbb R}\frac{1}{(1+|x+a|)(1+|x+b|)}dx=2.$$
Taking into account \eqref{lk}, we deduce that for $n,m\in\mathbb N^*$, we have
$$\int_{\mathbb R}\frac{1}{(1+|x+\mu_n|)(1+|x+\mu_m|)}dx\leqslant \frac{C\log(1+|n^2-m^2|)}{1+|n^2-m|^2}\leqslant  \frac{C}{1+|n-m|}.$$
Going back to \eqref{prem} implies that 
$$\int_0^T |u(t)|^2dt\leqslant {R_\varepsilon(T)}e^{2\frac{3^{\frac{3}{4}}(1+\varepsilon)}{4T(1-\varepsilon)^2}} \sum_{n,m\in\mathbb N^*}\frac{|a_na_m|}{1+|n-m|}.$$
Using Hilbert's inequality implies that 
\begin{equation}\label{finOK}\int_0^T |u(t)|^2dt\leqslant {R_\varepsilon(T)}e^{2\frac{3^{\frac{3}{4}}(1+\varepsilon)}{4T(1-\varepsilon)^2}} \sum_{n}|a_n|^2\leqslant  {R_\varepsilon(T)}e^{2\frac{3^{\frac{3}{4}}(1+\varepsilon)}{4T(1-\varepsilon)^2}}||y_0||^2_{H^{-1}(0,T)}.\end{equation}
We already remarked that $C_{S}(T,1)$  is  also the smallest constant $C>0$ such that for every $y^0\in H^{-1}(0,1)$, there exists some control $u$ driving $y^0$ to $0$ at time $T$ with $$||u||_{L^2(0,T)}\leqslant C ||y^0||_{H}.$$
Since \eqref{finOK} is true for any $y_0\in H^{-1}(0,1)$, we deduce that 
$$C_S(T,1)\leqslant  {R_\varepsilon(T)}e^{\frac{3^{\frac{3}{4}}(1+\varepsilon)}{4T(1-\varepsilon)^2}},$$
which ends the proof of Theorem \ref{th:3} by going back to the definition of $\beta_+$ given in \eqref{defb} and using that $\varepsilon>0$ is arbitrary small.
\section{Additional remarks and perspectives }
\label{sec:conc}
\paragraph{\textbf{Fractional Schr\"odinger equations.}} Thanks to the continuous functional calculus for positive self-adjoint operators, one can define any positive power of $-\Delta$. Let us consider here some $\alpha>1$ and let us call $\Delta^{\alpha/2}:=-(-\Delta)^{\alpha/2}$.

In \cite{PL17}, we studied generalizations of the controlled Schr\"odinger equations on $(0,T)\times(0,L)$, that we write as

\begin{equation}\label{anosc}
\left\{
\begin{aligned}
y_t&=i \Delta^{\alpha/2} y+bu&\mbox{ in } (0,T)\times(0,L),&\\
y(0,\cdot)&=y^0&\mbox{ in } (0,L),&
\end{aligned}
\right .
\end{equation}
where, for every $\varphi\in \mathcal D(\Delta^{\alpha/2})$,
$$b(\varphi)=-(\Delta^{-1}\varphi)'(0),$$
\textit{i.e.}
$$b:=\delta_0'\circ\Delta^{-1},$$
and $u\in L^2((0,T),\mathbb C)$. In \cite{PL14,PL17}, we gave some precise upper bound on the cost of fast controls for \eqref{anosc}. However, these upper bounds are very far from the lower bounds given in \cite{PL15}. We suspect that the method presented here can be used also in this context to improve drastically the results of \cite{PL17}. However, apart from the fact that the computations are likely to be very heavy, another important problem is the study of the product $P_l$ given in \eqref{PL}. Indeed, when $\lambda_k=(k\pi)^{\alpha}$, no explicit expression is available for $P_l$, and obtaining optimal estimates for this product is not totally straightforward. Notably, the estimates used in \cite{PL14,PL17} seem to be numerically not optimal for $\alpha\in (1,2)$. Obtaining interesting results in this case would first require to understand how to estimate well $P_l$ in this case. We leave it for a future work.

\paragraph{\textbf{Heat and fractional heat equations.}}
Another well-studied case is the heat equation 

\begin{equation*}
\left\{
\begin{aligned}
y_t-y_{xx}&=0&\mbox{ in } (0,T)\times(0,L),&\\
y(0,\cdot)&=y^0&\mbox{ in }(0,L),& \\
y(\cdot,0)&=u(t)&\mbox{ in }(0,T),&\\y(\cdot,L)&=0&\mbox{ in }(0,T),
\end{aligned}
\right .
\end{equation*}
where $y^0\in H^{-1}(0,L)$ and $u\in L^2(0,T)$. We call $C_H(T,L)$ the cost of fast controls. With the same notation as for \eqref{schr}, if we introduce 

 \begin{equation}\label{defb}\alpha_-=\liminf_{T\rightarrow 0^+} T\log (C_H(T,L)),\,\,\alpha_+=\limsup_{T\rightarrow 0^+} T\log (C_H(T,L)),\end{equation}
 then 
 $$L^2/2\leqslant \alpha_-\leqslant \alpha_+\leqslant 0,6966L^2,$$  
 
 the lower bound being obtained in \cite{PL15} and the upper bound in \cite{c5}. This problem has a long story and has led to many successive improvements (\cite{c1,c2,c3,c4,TT,PL15,c5}), but as for the Schr\"odinger equation, it is now conjectured that the lower bound is optimal, \textit{i.e.}
 $$\beta_-=\beta_+=L^2/2.$$ 

It would be tempting to try to implement our strategy in this case. In fact, well-known computations imply that the weight $\omega$ introduced in \eqref{oms} has to be changed into $e^{-\sqrt{\pi|x|}}$ for $x\in\mathbb R$. The strategy of Section \ref{sec:part} will then still work.
However, the problem comes in fact from the application of the moment method. Indeed,  at some point, since the $\lambda_k$ are now changed into $-i\lambda_k$, it will be mandatory to understand well the behaviour of  $\mathcal F(g_l)$ on the imaginary axis, and not only on the real line as in the expressions \eqref{ez1} and \eqref{ez2}. This would be certainly possible (using classical tools in complex analysis) if we were able to give an extension of the lower bound \eqref{ez2} not only on some intervals, but on the whole line $\mathbb R$. The author did not manage to extend such a bound on the whole line (remark that in \cite[Theorem B.4]{RS}, the authors were able to obtain a lower bound on the whole interval $(-3/4,3/4)$, but it is of little help for us).

The same problems occur of course if we want to improve the upper bounds on the cost of fast control for fractional heat equations
\begin{equation}\label{ano}
\left\{
\begin{aligned}
y_t&=\Delta^{\alpha/2} y+bu&\mbox{ in } (0,T)\times(0,L),&\\
y(0,\cdot)&=y^0&\mbox{ in }(0,L),& 
\end{aligned}
\right .
\end{equation}
that are given in \cite{PL17}. 

\paragraph{\textbf{Other constructions.}} Here, we chose to exploit the link between the Hilbert transform and the Poisson and conjugate Poisson transforms, in order to transform our weights into well-prepared weights, which turns out to be particularly adapted for  the applications we have in mind. It would be interesting to understand if other transforms can lead to different or better results, either to treat the same kind of weights (that have some H\"older regularity), or to treat explicitly other classes of weights that might be more irregular, or particular examples of interest.

\section*{Acknowledgements}
This work was funded by the french Agence Nationale de la Recherche (Grant ANR-22-CPJ1-0027-01).

\bibliographystyle{plain}
\bibliography{biblio}

\end{document}